\theoremstyle{thmstyleone}%
\newtheorem{theorem}{Theorem}
\newtheorem{proposition}[theorem]{Proposition}%
\theoremstyle{thmstyletwo}%
\theoremstyle{thmstylethree}%
\newcommand{\RR}{\mathbb{R}}
\newcommand{\IR}{\mathbb{IR}}
\newcommand{\lb}[1]{\underline{#1}}
\newcommand{\ub}[1]{\overline{#1}}
\newcommand{\ib}[1]{\mathbf{#1}}
\newcommand{\ubf}{\overline{f}_{\textnormal{fin}}}
\def\F{\mathcal{F}}
\begin{document}

\title[Interval TP: Feasibility, Optimality and the Worst Optimal Value]{Interval Transportation Problem: Feasibility, Optimality and the Worst Optimal Value}


\author*[1,2]{\fnm{Elif} \sur{Garajová}}\email{elif@kam.mff.cuni.cz}

\author[2,3]{\fnm{Miroslav} \sur{Rada}}\email{miroslav.rada@vse.cz}


\affil*[1]{\orgdiv{Dpt. of Applied Mathematics}, \orgname{Charles University, Faculty of Mathematics and Physics}, \orgaddress{\street{Malostransk\'{e} n\'{a}m. 25}, \city{Prague}, \postcode{11800}, \country{Czech Republic}}}

\affil[2]{\orgdiv{Dpt. of Econometrics}, \orgname{Prague University of Economics and Business, Faculty of Informatics and Statistics}, \orgaddress{\street{n\'{a}m.~W.~Churchilla 4}, \city{Prague}, \postcode{13067}, \country{Czech Republic}}}

\affil[3]{\orgdiv{Dpt. of Financial Accounting and Auditing}, \orgname{Prague University of Economics and Business, Faculty of Finance and Accounting}, \orgaddress{\street{n\'{a}m.~W.~Churchilla 4}, \city{Prague}, \postcode{13067}, \country{Czech Republic}}}


\abstract{We consider the model of a transportation problem with the objective of finding a minimum-cost transportation plan for shipping a~given commodity from a~set of supply centers to the customers. Since the exact values of supply and demand and the exact transportation costs are not always available for real-world problems, we adopt the approach of interval programming to represent such uncertainty, resulting in the model of an interval transportation problem. The interval model assumes that lower and upper bounds on the data are given and the values can be independently perturbed within these bounds.

In this paper, we provide an overview of conditions for checking basic properties of the interval transportation problems commonly studied in interval programming, such as weak and strong feasibility or optimality. We derive a condition for testing weak optimality of a solution in polynomial time by finding a suitable scenario of the problem. Further, we formulate a similar condition for testing strong optimality of a solution for transportation problems with interval supply and demand (and exact costs).

Moreover, we also survey the results on computing the best and the worst optimal value. We build on an exact method for solving the NP-hard problem of computing the worst (finite) optimal value of the interval transportation problem based on a decomposition of the optimal solution set by complementary slackness. Finally, we conduct computational experiments to show that the method can be competitive with the state-of-the-art heuristic algorithms.
}

\keywords{transportation problem, interval programming, optimal value}



\maketitle

\section{Introduction}\label{sec:intro}
Transportation problems~\citep{Hitchcock:1941:TP} often arise in various practical applications, and as such they belong to the commonly studied and used models in operations research. The goal of a transportation problem is to find a~transportation plan for shipping a given commodity from a~set of supply centers to the customers, while minimizing the total transportation costs and respecting the available supply levels and the customer demand.

Since uncertainty is an important factor present in many real-world problems, we address the issue that the values of supply and demand and even the unit transportation costs may not always be known exactly in advance. In this paper, we formulate the transportation problem as a model of interval linear programming~(see e.g. \cite{Rohn:2006:Intervallinearprogramminga} for general introduction to interval linear programming), where we assume that only lower and upper bounds on the exact quantities are given and the values can be perturbed independently within these bounds. Such uncertain model is known in the literature as the \emph{interval transportation problem}~\citep{Chanas:1993:ITP}.

One of the essential tasks in interval programming is computing the optimal value range, i.e. the best and the worst value that is optimal for some choice of the uncertain coefficients. In the traditional sense, these values are allowed to be infinite, if there is a scenario of the interval program that is unbounded or infeasible. However, for some models that often contain an infeasible scenario, it may be more desirable to compute the \emph{worst finite optimal value}~\citep{Hladik:2018:FinWorstCase}. This is also the case for interval transportation problems -- if we require both supply and demand to be satisfied exactly, then any slight change in one of the values will render a feasible scenario infeasible.

Computing the worst finite optimal value is a challenging task in interval linear programming, which was also recently proved to be NP-hard for a special class of interval transportation problems~\citep{Hoppmann:2021:MinCostFlow}. Several algorithms providing an approximate solution for the problem have been proposed throughout the years, such as the heuristic method by \cite{Juman:2014:ITP}, the genetic algorithm by~\cite{Xie:2017:ITP} and the iterative local search by \cite{Cerulli:2017:BestWorstValues}. Moreover, \cite{Liu:2003:ITP} derived a linearly constrained nonlinear formulation by duality, which provides an exact solution in theory, but proved to be too difficult for the solvers to obtain the global optimum in practice, even for small problems. Stronger properties were derived for interval transportation problems with costs immune against the transportation paradox~\citep{Szwarc:1971:Paradox}, allowing the design of more powerful methods for this special case \citep{DAmbrosio:2020:optimalvaluerange,Carrabs:2021:IITP}.

In the first part of the paper, we discuss several properties often considered in interval linear programming, such as \emph{weak or strong feasibility} and optimality of a solution or of the interval problem itself. Characterizations of these properties for general interval linear programs were studied by \cite{Rohn:2006:Intervallinearprogramminga,Hladik:2012a}, further results for programs with a fixed constraint matrix were presented by \cite{Garajova:2017:OnTheProp}. Here, we derive analogous characterizations for interval transportation problems and study the computational complexity of testing these properties. Namely, we prove a condition for testing \emph{weak optimality} of a given solution in polynomial time by finding a suitable scenario of the transportation problem. We also formulate a polynomial-time condition for testing \emph{strong optimality} of a solution for transportation problems with interval supply and demand vectors and exact costs.

We also study the problem of computing the best and the worst (finite) optimal value of an interval transportation problem. We build on an \emph{exact method} for solving the NP-hard problem of computing the worst finite optimal value, which is based on the characterization of the optimal solution set via duality and complementary slackness~\citep{Garajova:2019:optimalsolutionseta,garajova:2021:ExactMethodWorst}. A similar approach using guided basis enumeration was also introduced by \cite{Hladik:2018:FinWorstCase}. Further, we conduct a series of computational experiments indicating that the proposed exact method can be competitive with the state-of-the-art heuristic algorithms.

The paper is organized as follows: First, we formally introduce the model of an interval transportation problem and define the optimal solutions and optimal values in Section~\ref{sec:itp}. In Section~\ref{sec:complexity}, we provide an overview of conditions for checking different properties of interval transportation problems and study their computational complexity. In Section~\ref{sec:worstFinVal}, we derive the exact method for computing the worst finite optimal value. Section~\ref{sec:experiment} presents the results of numerical experiments testing the proposed method against other available algorithms for solving the problem. 

\section{Interval Transportation Problem}\label{sec:itp}
Let us now introduce the interval programming model of a transportation problem with uncertain supply, demand and cost coefficients.

\subsection{Interval data} Here, we use the symbol $\IR$ to denote the set of all closed real intervals. Given two real vectors $\lb{a}, \ub{a} \in \RR^{n}$ satisfying $\lb{a} \le \ub{a}$ (element-wise), we define an \emph{interval vector} $\ib{a} \in \IR^{n}$ as the set of vectors
\[ \ib{a} = [\lb{a}, \ub{a}] = \{a \in \RR^{n} : \lb{a} \le a \le \ub{a} \}, \]
where $\lb{a}$ and $\ub{a}$ are the \emph{lower and upper bound} of the interval vector $\ib{a}$, respectively. An \emph{interval matrix} can be defined analogously. Throughout the paper, intervals and interval objects are denoted by bold letters (or written out using their lower and upper bounds).

\subsection{Problem formulation}
Suppose we are given the following data:
\begin{itemize}
	\item a set of $m$ \emph{sources} (or supply centers) denoted by $I = \{1, \dots, m\}$, such that each source $i \in I$ has a limited \emph{supply} $s_i$,
	\item a set of $n$ \emph{destinations} (or customers) denoted by $J = \{1, \dots, n\}$, such that each destination $j \in J$ has a \emph{demand} $d_j$ to be satisfied,
	\item a \emph{unit cost} $c_{ij}$ of transporting one unit of goods from source~$i$ to destination~$j$.
\end{itemize}
Furthermore, assume that the supply, demand and costs are uncertain quantities that can vary within the given non-negative intervals $\ib{s}_i = [\lb{s}_i, \ub{s}_i]$, $\ib{d}_j = [\lb{d}_j, \ub{d}_j]$ and $\ib{c}_{ij} = [\lb{c}_{ij}, \ub{c}_{ij}]$, respectively.
The objective of the transportation problem is to find a minimum-cost transportation plan for shipping goods from the sources to the destinations such that the supply and demand requirements are satisfied.

Formally, given the interval supply and demand vectors $\ib{s} \in \IR^m$, $\ib{d} \in \IR^n$ and the transportation costs $\ib{c} \in \IR^{m\times n}$, the \emph{interval transportation problem (ITP)} can be represented by an interval linear programming model, which is understood as the set of all linear programs (transportation problems) with the costs, supply and demand vectors lying in the corresponding intervals $\ib{c}$, $\ib{s}$ and $\ib{d}$. A particular linear program in the interval transportation problem, which is determined by a cost $c \in \ib{c}$, supply vector $s \in \ib{s}$ and demand vector $d \in \ib{d}$, is called a~\emph{scenario} of the ITP. 

In the recent literature, an interval transportation problem in the following form is usually considered:
\begin{equation}\tag{ITP$^\le$}\label{eq:itp:unbalanced}
	\begin{alignedat}{3}
		\text{minimize } 	& \quad & \omit\rlap{$\displaystyle \sum_{i \in I} \sum_{j \in J} [\lb{c}_{ij}, \ub{c}_{ij}] x_{ij}$} \\
		\text{subject to } 	&	& \sum_{j \in J} x_{ij} & \le [\lb{s}_i, \ub{s}_i], & \qquad & \forall i \in I,  \\
		&	& \sum_{i \in I} x_{ij} & = [\lb{d}_j, \ub{d}_j], 	&		& \forall j \in J, \\
		&	& x_{ij} 				& \ge 0, 					&		& \forall i \in I, j \in J,
	\end{alignedat}
\end{equation}
where the variables $x_{ij}$ correspond to the amount of goods transported from source $i$ to destination $j$. 
If the supplies are required to be completely depleted, we can also consider the commonly used equation-constrained formulation
\begin{equation}\tag{ITP$^=$}\label{eq:itp:balanced}
	\begin{alignedat}{3}
		\text{minimize } 	& \quad & \omit\rlap{$\displaystyle \sum_{i \in I} \sum_{j \in J} [\lb{c}_{ij}, \ub{c}_{ij}] x_{ij}$} \\
		\text{subject to } 	&	& \sum_{j \in J} x_{ij} & = [\lb{s}_i, \ub{s}_i], & \qquad & \forall i \in I,  \\
		&	& \sum_{i \in I} x_{ij} & = [\lb{d}_j, \ub{d}_j], 	&		& \forall j \in J, \\
		&	& x_{ij} 				& \ge 0, 					&		& \forall i \in I, j \in J.
	\end{alignedat}
\end{equation}

\subsection{Feasibility and optimality}\label{ssec:feasopt} An interval linear program comprises an entire set of classical linear programs and there are several different notions used to define a feasible or an optimal solution. Perhaps the most natural is the notion of weak and strong solutions.

A given solution $x \in \RR^{m \times n}$ is called \emph{weakly feasible/optimal} if it is  feasible/optimal for at least one scenario $(c,s,d) \in (\ib{c}, \ib{s}, \ib{d})$ of the interval transportation problem. Similarly, a given $x \in \RR^{m \times n}$ is called \emph{strongly feasible/optimal}, if it is feasible/optimal for each scenario of the transportation problem. 

Let us denote by $\mathcal{F}(s,d)$ the feasible set of a scenario of the ITP with supply~$s$ and demand~$d$. Then, we can also describe the set of all weakly and strongly feasible solutions as the union $ \bigcup \{ \mathcal{F}(s,d) : s \in \ib{s},\, d \in \ib{d}\} $ and the intersection $ \bigcap \{ \mathcal{F}(s,d) : s \in \ib{s},\, d \in \ib{d}\} $ of all feasible sets, respectively.

Apart from checking the feasibility or optimality properties of a given solution, we can also consider properties of the interval transportation problem itself. We say that the ITP is \emph{weakly feasible/optimal} if there is at least one scenario that possesses a feasible/optimal solution. Analogously, it is said to be \emph{strongly feasible/optimal}, if each scenario of the ITP has a feasible/optimal solution. 

Regarding optimal values, we can consider the \emph{best optimal value} $\lb{f}$ and the \emph{worst optimal value} $\ub{f}$, which are defined as
\begin{align*}
	&\lb{f} = \inf\  \{ f(c,s,d) : c \in \ib{c},\, s \in \ib{s},\, d \in \ib{d} \},\\
	&\ub{f} = \sup\, \{ f(c,s,d) : c \in \ib{c},\, s \in \ib{s},\, d \in \ib{d} \},
\end{align*}
where $f(c,s,d)$ denotes the optimal value of a given scenario $(c,s,d)$, with $f(c,s,d) = \infty$ if the scenario is infeasible.
Since the objective value of the transportation problem with non-negative costs is bounded below, the best optimal value will be finite (provided there is at least one feasible solution). However, the worst optimal value can be $\ub{f} = \infty$, if the interval program contains an infeasible scenario. Therefore, we will mainly focus on computing the worst finite optimal value, which is defined as the worst optimal value over all feasible scenarios, i.e.
\[ \ubf = \max\, \{ f(c,s,d) : c \in \ib{c}, s \in \ib{s}, d \in \ib{d} \text{ with } \F(s,d) \neq \emptyset \}. \]
For the transportation problem, we can also express the condition $\F(s,d) \neq \emptyset$ in terms of supply and demand, namely
\begin{align*}
	\sum_{i \in I} s_i \ge \sum_{j\in J} d_j & \text{ for the formulation } \eqref{eq:itp:unbalanced},\\
	\sum_{i \in I} s_i = \sum_{j\in J} d_j  & \text{ for the formulation } \eqref{eq:itp:balanced}.
\end{align*}
Scenarios satisfying the latter condition (with equal total supply and demand) are also referred to as \emph{balanced} transportation problems.

Note that while it is possible to transform a mixed transportation problem with fixed data to an equation-constrained one, the usual transformations are not always applicable to interval transportation problems, since they may change some properties of the interval program. For example, by adding a~dummy node to transform the~\eqref{eq:itp:unbalanced} formulation to the~\eqref{eq:itp:balanced} formulation, we may change strong feasibility or strong optimality of the problem, since this transformation can introduce an infeasible scenario $(s,d)$ with $\sum_{i \in I} s_i \neq \sum_{j \in J} d_j$ (and thus it can also change $\ub{f}$). 

\section{Properties of Interval Transportation Problems}\label{sec:complexity}

In this section, we discuss and survey the computational complexity of testing various properties of interval transportation problems and derive the corresponding testing conditions. We consider the traditionally studied problems in interval linear programming (see also \cite{Hladik:2012a}) -- weak and strong feasibility, optimality and the problem of computing the optimal value range. 

\begin{table}[b]
	\caption{Computational complexity of testing properties of interval linear programs with a fixed coefficient matrix and interval transportation problems. First four rows of the table refer to the respective properties of the interval program itself, the last four rows refer to properties of a particular solution (see Section~\ref{ssec:feasopt}.)}\label{tab:prop:itp}
	\begin{tabular*}{\textwidth}{l @{\extracolsep{\fill}} ccc}
		\hline\noalign{\smallskip}
		& $\min\, \ib{c}^T x$ & Interval transportation \\
		& $Ax \le \ib{a}, Bx = \ib{b},\, x \ge 0$ & problem \\ 
		\noalign{\smallskip}\hline\noalign{\smallskip}
		strong feasibility & co-NP-hard & {polynomial}\\
		weak feasibility & polynomial & polynomial\\ \hline
		strong optimality & co-NP-hard & {polynomial}\\
		weak optimality & polynomial & polynomial \\ \hline
		best optimal value & polynomial & polynomial \\
		worst optimal value & NP-hard & {polynomial} \\
		worst finite optimal value & NP-hard & ?\footnotemark[1] \\ \hline
		strong feasibility (solution) & polynomial & polynomial \\
		weak feasibility (solution) & polynomial & polynomial\\ \hline
		strong optimality (solution) & co-NP-hard & ? \\
		weak optimality (solution) & ? & polynomial \\
		\noalign{\smallskip}\hline\noalign{\smallskip}
	\end{tabular*}
	\footnotetext[1]{The worst finite optimal value is NP-hard for \eqref{eq:itp:balanced}.}
\end{table}

An overview of the complexity of testing these properties for general linear programs with interval objective and right-hand side is presented in Table~\ref{tab:prop:itp} (see also \cite{Garajova:2017:OnTheProp} for further details). Note that the first four rows of the table refer to the properties of the interval program itself, while the last four rows refer to the properties of a particular solution (as they were introduced in Section~\ref{ssec:feasopt}).

The results in this section focus mainly on the properties of \eqref{eq:itp:unbalanced}, however, the results for the alternative formulation \eqref{eq:itp:balanced} are very similar and can be derived in an analogous manner. The computational complexity of testing the properties of interval transportation problems is also summarized in Table~\ref{tab:prop:itp}.

\subsection{Weak and Strong Feasibility}
We introduced the weakly and strongly feasible solutions of an interval transportation problem in Section~\ref{sec:itp}. Checking weak feasibility of a given solution can be done in polynomial time even for general interval linear systems (see \cite{Oettli:1964:Compatibilityapproximatesolution} and \cite{Gerlach:1981:ZurLosunglinearer}). For interval transportation problems, the characterization can be simplified to the condition presented in Proposition~\ref{prop:weakFeas:sol}. 

Note that the result can also be directly derived from the fact that the set of all weakly feasible solutions is the union of the feasible sets $\mathcal{F}(s,d)$ over all scenarios. Since $s^1 \le s^2$ implies $\mathcal{F}(s^1,d) \subseteq \mathcal{F}(s^2,d)$ for any supply vectors $s^1, s^2 \in \ib{s}$, the set of weakly feasible solutions is determined by $\ub{s}$.

\begin{proposition}\label{prop:weakFeas:sol}
A given $x \in \RR^{m \times n}$ is a weakly feasible solution of~\eqref{eq:itp:unbalanced} if and only if it solves the linear system
\begin{equation}\label{eq:prop:weakFeas}
		\begin{alignedat}{3}
			&	& \sum_{j \in J} x_{ij} & \le \ub{s}_i, & \qquad & \forall i \in I,  \\
			&	\lb{d}_j \le&\ \sum_{i \in I} x_{ij} & \le \ub{d}_j, 	&		& \forall j \in J, \\
			&	& x_{ij} 				& \ge 0, 					&		& \forall i \in I, j \in J.
		\end{alignedat}
\end{equation}
\end{proposition}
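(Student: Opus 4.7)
The plan is to prove the two directions of the equivalence directly from the definition of weak feasibility, by explicitly constructing a witness scenario in the nontrivial direction.

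For the forward direction, I would start from the assumption that $x$ is weakly feasible, so there exist $s \in \ib{s}$ and $d \in \ib{d}$ such that $\sum_{j \in J} x_{ij} \le s_i$ for all $i$, $\sum_{i \in I} x_{ij} = d_j$ for all $j$, and $x_{ij} \ge 0$. Chaining $\sum_{j \in J} x_{ij} \le s_i \le \ub{s}_i$ gives the supply inequality in \eqref{eq:prop:weakFeas}. The equality $\sum_{i \in I} x_{ij} = d_j$ together with $d_j \in [\lb{d}_j, \ub{d}_j]$ yields the two-sided bound on the column sums. The nonnegativity constraint carries over unchanged.

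For the reverse direction, I would construct an explicit feasible scenario witnessing weak feasibility. Given $x$ satisfying \eqref{eq:prop:weakFeas}, set
\[
s_i = \max\bigl\{\lb{s}_i,\ \textstyle\sum_{j \in J} x_{ij}\bigr\} \quad \text{for each } i \in I, \qquad d_j = \sum_{i \in I} x_{ij} \quad \text{for each } j \in J.
\]
The first constraint of \eqref{eq:prop:weakFeas} ensures $s_i \le \ub{s}_i$, and by construction $s_i \ge \lb{s}_i$, so $s \in \ib{s}$. The middle constraint places $d_j \in [\lb{d}_j, \ub{d}_j]$, so $d \in \ib{d}$. For this particular scenario $(s,d)$, the supply constraint $\sum_{j \in J} x_{ij} \le s_i$ holds directly from the definition of $s_i$, the demand equation holds by the definition of $d_j$, and $x \ge 0$ is inherited. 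Hence $x$ is feasible for $(s,d)$ and thus weakly feasible.

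There is no real obstacle here: the argument is essentially a one-line monotonicity observation, which the remark preceding the proposition already hints at by noting that $s^1 \le s^2$ implies $\F(s^1,d) \subseteq \F(s^2,d)$, so the weakly feasible set equals $\bigcup_{d \in \ib{d}} \F(\ub{s}, d)$. The only subtlety worth flagging is that the supply bound in the witness scenario must be chosen large enough to dominate $\sum_{j} x_{ij}$ while still respecting the lower bound $\lb{s}_i$, which is why the $\max$ is needed rather than simply taking $s_i = \sum_j x_{ij}$.
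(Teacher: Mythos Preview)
Your proof is correct and matches the paper's approach: the paper does not give a formal proof of this proposition but instead points to the monotonicity observation $s^1 \le s^2 \Rightarrow \F(s^1,d) \subseteq \F(s^2,d)$ (and to the general Oettli--Prager/Gerlach characterization), which is precisely what you spell out. Your explicit witness scenario $s_i = \max\{\lb{s}_i, \sum_{j} x_{ij}\}$, $d_j = \sum_i x_{ij}$ is moreover the very same construction the paper employs later in the proof of Proposition~\ref{prop:weakOpt:sol}.
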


Strong feasibility of a given solution can also be checked in polynomial time for general interval linear programs \citep{Rohn:2006:Intervallinearprogramminga}. Since in both ITP formulations, the demands have to be satisfied exactly, a strongly feasible solution is rather uncommon. Again, the characterization shown in Proposition~\ref{prop:strFeas:sol} follows from the general result. 

Using the fact that the set of all strongly feasible solutions is the intersection of the feasible sets $\mathcal{F}(s,d)$, it is easy to see that the set of strongly feasible solutions is determined by the lowest possible supply $\lb{s}$. Furthermore, no strongly feasible solution can exist if $\lb{d} \neq \ub{d}$, since the demand has to be satisfied exactly in all scenarios.

\begin{proposition}\label{prop:strFeas:sol}
A given $x \in \RR^{m \times n}$ is a strongly feasible solution of \eqref{eq:itp:unbalanced} if and only if $\lb{d} = \ub{d}$ and $x$ solves the linear system
\begin{equation}\label{eq:prop:strFeas}
	\begin{alignedat}{3}
		&	& \sum_{j \in J} x_{ij} & \le \lb{s}_i, & \qquad & \forall i \in I,  \\
		&	& \sum_{i \in I} x_{ij} & = \ub{d}_j, 	&		& \forall j \in J, \\
		&	& x_{ij} 				& \ge 0, 					&		& \forall i \in I, j \in J.
	\end{alignedat}
\end{equation}
\end{proposition}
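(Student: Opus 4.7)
The plan is to prove both directions directly from the definition of strong feasibility as feasibility across \emph{every} scenario $(s,d) \in (\ib{s}, \ib{d})$, exploiting the monotonicity of the supply inequality and the rigidity of the demand equation.

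For the ``only if'' direction, I would assume $x$ is strongly feasible and derive the three claimed properties separately. The demand constraint $\sum_{i \in I} x_{ij} = d_j$ must hold for \emph{every} $d_j \in [\lb{d}_j, \ub{d}_j]$; since a single real number cannot equal two distinct values, this forces $\lb{d}_j = \ub{d}_j$ for all $j \in J$, so $\lb{d} = \ub{d}$ and moreover $\sum_{i \in I} x_{ij} = \ub{d}_j$. For the supply side, the inequality $\sum_{j \in J} x_{ij} \le s_i$ must hold in particular for the smallest admissible supply $s_i = \lb{s}_i$, yielding $\sum_{j \in J} x_{ij} \le \lb{s}_i$. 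Nonnegativity $x_{ij} \ge 0$ is inherited from any single scenario.

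For the ``if'' direction, I would fix an arbitrary scenario $(s,d) \in (\ib{s}, \ib{d})$ and verify the three groups of constraints of that scenario. The supply constraint follows from $\sum_{j \in J} x_{ij} \le \lb{s}_i \le s_i$. The demand constraint follows from $\sum_{i \in I} x_{ij} = \ub{d}_j = \lb{d}_j = d_j$, using the assumption $\lb{d} = \ub{d}$. Nonnegativity is direct. Since the scenario was arbitrary, $x$ is feasible in every scenario, hence strongly feasible.

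There is no real obstacle here: the argument is essentially monotonicity of the feasible set in $s$ (widening $s$ preserves feasibility) combined with the observation that an equation with interval right-hand side can hold universally only when the interval collapses to a point. Alternatively, one could cite the general Rohn characterization for strong feasibility of interval linear systems \citep{Rohn:2006:Intervallinearprogramminga} and specialize it to the block structure of~\eqref{eq:itp:unbalanced}; both routes lead to the same conclusion, but the direct scenario-wise argument above is shorter and self-contained.
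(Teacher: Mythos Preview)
Your proposal is correct and essentially mirrors the paper's justification: the paper does not give a formal proof but notes that the result follows either from Rohn's general characterization or from the observation that the intersection of feasible sets is determined by the lowest supply $\lb{s}$, together with the fact that no strongly feasible solution exists when $\lb{d}\neq\ub{d}$. Your scenario-wise argument spells out exactly this reasoning, and you even mention the Rohn alternative, so there is nothing to add.
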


Weak and strong feasibility of the interval transportation problem itself can also be tested easily, as it only depends on whether it is possible to satisfy the required demands in the best/worst scenario.

\begin{proposition}
	Problem \eqref{eq:itp:unbalanced} is weakly feasible if and only if
	\[ 
		\sum_{i \in I} \ub{s}_i \ge \sum_{j\in J} \lb{d}_j.
	\]
	Furthermore, problem \eqref{eq:itp:unbalanced} is strongly feasible if and only if
	\[ 
		\sum_{i \in I} \lb{s}_i \ge \sum_{j\in J} \ub{d}_j.
	\]
\end{proposition}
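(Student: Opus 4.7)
The plan is to reduce both claims to the classical feasibility criterion for a (deterministic) transportation problem in the mixed $\le/=$ form: a scenario with supply $s$ and demand $d$ is feasible if and only if $\sum_{i\in I} s_i \ge \sum_{j\in J} d_j$. This is a standard fact (it can be seen by adding a dummy destination to balance the problem and invoking the well-known feasibility criterion for balanced transportation problems), and it is in fact already noted in Section~\ref{ssec:feasopt} of the paper. Given this, the two equivalences become statements about when an inequality between two linear forms over the boxes $\ib{s}$ and $\ib{d}$ is satisfiable, respectively always satisfied.

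For the first equivalence, by definition the ITP is weakly feasible iff there exists a scenario $(s,d) \in \ib{s}\times\ib{d}$ with a nonempty feasible set. By the criterion above this is equivalent to the existence of $s\in\ib{s}$ and $d\in\ib{d}$ satisfying $\sum_i s_i \ge \sum_j d_j$. Since the left-hand side is coordinatewise nondecreasing in $s$ and the right-hand side is coordinatewise nondecreasing in $d$, the best choice is $s=\ub{s}$ and $d=\lb{d}$, and the existential statement collapses to the single inequality $\sum_i \ub{s}_i \ge \sum_j \lb{d}_j$.

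For the second equivalence, strong feasibility requires $\sum_i s_i \ge \sum_j d_j$ to hold for every $(s,d)\in\ib{s}\times\ib{d}$. Using the same monotonicity, this universally quantified inequality is equivalent to its worst instance, namely $s=\lb{s}$ and $d=\ub{d}$, which yields $\sum_i \lb{s}_i \ge \sum_j \ub{d}_j$.

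I do not foresee a real obstacle: both implications in both equivalences amount to independently optimizing two linear forms over disjoint boxes. The only point that deserves care is the direction of the inequality in the underlying scenario-feasibility criterion, which is specific to the \eqref{eq:itp:unbalanced} formulation (with $\le$ on supplies and $=$ on demands); the analogous statement for \eqref{eq:itp:balanced} would require the corresponding equality $\sum_i s_i = \sum_j d_j$ to be satisfiable or universally satisfied, which is a mildly different box-feasibility question but handled by the same bound-substitution idea.
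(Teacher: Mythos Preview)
Your argument is correct and is exactly the natural one: reduce to the scenario-level feasibility criterion $\sum_i s_i \ge \sum_j d_j$ (already recorded in Section~\ref{ssec:feasopt}) and then optimize the two sides independently over the boxes $\ib{s}$ and $\ib{d}$ using monotonicity. The paper states this proposition without proof, so there is nothing further to compare; your write-up supplies precisely the justification the paper leaves implicit.
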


Note that this is significantly different from the general interval programs, for which strong feasibility can be challenging to decide (see Table~\ref{tab:prop:itp}).

\subsection{Weak and Strong Optimality}

Weak and strong optimality of a given solution was also introduced in Section~\ref{sec:itp}. Checking weak optimality can be performed by using the characterization of optimal solutions known from the theory of linear programming: an optimal solution needs to be feasible for the primal program and there needs to be a feasible solution of the dual program such that the primal and the dual objective values for these solutions are equal (for some scenario with $c \in \ib{c}$, $s \in \ib{s}$, $d \in \ib{d}$).

Considering a scenario $(c,s,d) \in (\ib{c}, \ib{s}, \ib{d})$ of the \eqref{eq:itp:unbalanced} model, the dual program reads
\begin{equation}\label{eq:itp:dual}
	\begin{alignedat}{3}
		\text{maximize } 	& \quad & \omit\rlap{$\displaystyle \sum_{i \in I}\nolimits s_i u_i + \sum_{j \in J}\nolimits d_j v_j$} \\
		\text{subject to } 	&	& u_i + v_j & \le c_{ij}, & \qquad\quad & \forall i \in I, j \in J  \\
		&	& u_{i} 				& \le 0, 					&		& \forall i \in I.
	\end{alignedat}
\end{equation}
A solution $x \in \mathbb{R}^{m \times n}$ is optimal for the scenario $(c,s,d)$ if it is feasible and there exists a solution $(u, v) \in \mathbb{R}^{m+n}$, such that $(u,v)$ is feasible for the dual program \eqref{eq:itp:dual} and the primal and dual objectives are equal (also known as the property of strong duality), i.e.
\begin{equation}\label{eq:itp:strongdual}
	\sum_{i\in I}\nolimits \sum_{j \in J}\nolimits c_{ij}x_{ij} = \sum_{i \in I}\nolimits s_i u_i + \sum_{j \in J}\nolimits d_j v_j.
\end{equation}

A suitable scenario for checking weak optimality of a given solution $x$ can be found as shown in Proposition~\ref{prop:weakOpt:sol}. Here, the values of supply and demand are chosen depending on $x$, so that the demand is satisfied exactly and there is sufficient supply available, while respecting the lower bound $\lb{s}$. Suitable costs~$c_{ij}$ and values for the dual variables $u_i$ and $v_j$ are then computed within the linear system.
\begin{proposition}\label{prop:weakOpt:sol}
	A given $x \in \RR^{m \times n}$ is a weakly optimal solution of \eqref{eq:itp:unbalanced} if and only if it is weakly feasible and 
	the following linear system with variables $u_i$, $v_j$ and $c_{ij}$ is feasible:
	\begin{equation}\label{eq:prop:weakOpt:sol}
		\begin{alignedat}{3}
			\sum_{i\in I}\nolimits \sum_{j \in J}\nolimits c_{ij}x_{ij} &= \sum_{i \in I}\nolimits s_i u_i + \sum_{j \in J}\nolimits d_j v_j,\\
			 u_i + v_j &\le {c}_{ij}, && \forall i \in I, j \in J, \\
			 u_i &\le 0, && \forall i \in I,\\
			 \lb{c}_{ij} \le c_{ij} &\le \ub{c}_{ij}, & \quad &\forall i \in I, j \in J,
		\end{alignedat}
	\end{equation}
	where $s_{i} = \max\{\lb{s}_i, \sum_{j \in J} x_{ij}\}$ and $d_j = \sum_{i \in I} x_{ij}$.
\end{proposition}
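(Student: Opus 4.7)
The plan is to prove both implications separately, after first noting that once $x$ is fixed the auxiliary quantities $s_i$ and $d_j$ are constants, so \eqref{eq:prop:weakOpt:sol} is a genuine linear system in the unknowns $u_i$, $v_j$, $c_{ij}$; thus the characterization immediately yields a polynomial-time test via a single LP feasibility call of size $O(mn)$.

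For the sufficiency direction I would take any solution $(u, v, c)$ of \eqref{eq:prop:weakOpt:sol} under the assumption that $x$ is weakly feasible, and exhibit the scenario $(c, s, d)$ for which $x$ is optimal. Admissibility of this scenario is routine: by construction $s_i \ge \lb{s}_i$, and weak feasibility via Proposition~\ref{prop:weakFeas:sol} gives $\sum_j x_{ij} \le \ub{s}_i$, whence $s_i \le \ub{s}_i$; likewise $\lb{d}_j \le d_j \le \ub{d}_j$; and $c \in \ib{c}$ is imposed explicitly. Primal feasibility of $x$ then follows from the very definitions of $s$ and $d$, dual feasibility of $(u,v)$ is read off the remaining constraints of \eqref{eq:prop:weakOpt:sol}, and the first equation of \eqref{eq:prop:weakOpt:sol} is exactly the strong-duality identity \eqref{eq:itp:strongdual}, so $x$ is optimal for $(c,s,d)$ and hence weakly optimal.

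For the necessity direction, suppose $x$ is weakly optimal, witnessed by a scenario $(c^\star, s^\star, d^\star) \in (\ib{c}, \ib{s}, \ib{d})$ with dual optimum $(u^\star, v^\star)$. The demand side is free of difficulty: the primal equality constraints force $d^\star_j = \sum_i x_{ij} = d_j$, so we already work with the prescribed demand. The only real obstacle is to argue that the supply vector can be replaced by the specific $s_i = \max\{\lb{s}_i, \sum_j x_{ij}\}$ without disturbing strong duality, since in general $s^\star \ne s$. Here I would invoke LP complementary slackness on the supply constraints: whenever $u^\star_i < 0$, complementarity forces $\sum_j x_{ij} = s^\star_i$, and combined with $s^\star_i \ge \lb{s}_i$ this gives $s^\star_i = s_i$; whenever $u^\star_i = 0$, the product $s_i u^\star_i$ vanishes regardless of the value of $s_i$. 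Summing, $\sum_i s_i u^\star_i = \sum_i s^\star_i u^\star_i$, and together with $d = d^\star$ this preserves the strong-duality equation. Hence $(u^\star, v^\star, c^\star)$ witnesses the feasibility of \eqref{eq:prop:weakOpt:sol}, finishing the proof.

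The only delicate step is the complementary slackness argument above; the rest is straightforward bookkeeping about the feasible envelope of $x$. The proof as outlined transfers verbatim to \eqref{eq:itp:balanced} after strengthening the supply constraint to an equality, which additionally forces $s_i = \sum_j x_{ij}$ and consequently removes the $\max$ in the definition of $s$.
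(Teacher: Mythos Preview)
Your proof is correct, and the sufficiency direction is essentially identical to the paper's. The necessity direction, however, proceeds differently. The paper argues on the primal side: it shows that $x$ remains optimal for the modified scenario $(c', s, d)$ by a contradiction using the inclusion $\mathcal{F}(s,d) \subseteq \mathcal{F}(s',d)$ (any strictly better solution for the tighter supply $s \le s'$ would also be feasible, and hence better, for the original supply $s'$), and then appeals abstractly to LP duality to conclude that \eqref{eq:prop:weakOpt:sol} has a solution. You instead argue on the dual side: you keep the original dual optimum $(u^\star, v^\star)$ and use complementary slackness on the supply rows to show that $\sum_i s_i u^\star_i = \sum_i s^\star_i u^\star_i$, so the strong-duality identity survives the change of scenario and $(u^\star, v^\star, c^\star)$ is itself the required witness. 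Your route is a bit more constructive, since it exhibits the feasible point of \eqref{eq:prop:weakOpt:sol} explicitly rather than deducing its existence; the paper's route is slightly more self-contained in that it does not need to invoke complementary slackness separately. Both are short and entirely valid.
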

\begin{proof}

First, let us assume that $x$ is a weakly feasible solution and system~\eqref{eq:prop:weakOpt:sol} is feasible.
Then the scenario $(s,d)$ defined in the statement is a valid scenario of \eqref{eq:itp:unbalanced} with $s\in\ib{s}$, $d \in \ib{d}$ and clearly $x \in \mathcal{F}(s,d)$. 
Moreover, the feasibility of system~\eqref{eq:prop:weakOpt:sol} ensures that there exists $c\in \ib{c}$ such that dual feasibility and equality of objective values are satisfied.  Thus $x$ is optimal for the scenario $(c,s,d)$ and therefore weakly optimal.

On the other hand, suppose that $x$ is weakly optimal for the given instance of~\eqref{eq:itp:unbalanced}. 
Then, $x$ is optimal for some scenario $(c', s', d')$. By defining $s_{i}$ and $d_j$ for all $i \in I, j \in J$ as in the statement of the proposition, we obtain a valid scenario of the given~\eqref{eq:itp:unbalanced} with $s \in \ib{s}$ and $d \in \ib{d}$, since \[d_j = \sum_{i \in I} x_{ij} = d'_j \in \ib{d}_j, \quad \text{ and, }\quad \lb{s}_i \le s_i = \max\left\{\lb{s}_i, \sum_{j \in J} x_{ij}\right\} \le s'_i \le \ub{s}_i.\]
Furthermore, by the definition of vectors $s$ and $d$, we also have $x \in \mathcal{F}(s,d)$.

Now we prove that $x$ is also optimal for the scenario $(c', s, d)$. Suppose for contradiction that $x$ is not optimal for the new supply values $s$, then there is a solution~$x^*$ with a better objective value
\[ 
	\sum_{i \in I} \sum_{j \in J} c'_{ij} x^*_{ij} < \sum_{i \in I} \sum_{j \in J} c'_{ij} x_{ij}.
\]
Such $x^*$ is then also feasible for the scenario with supply $s'$ with the same objective value, which contradicts the assumption that $x$ is optimal for $s'$. Therefore, a weakly optimal solution $x$ is also optimal for the scenario chosen in the statement of the proposition. Since the constraints of system~\eqref{eq:prop:weakOpt:sol} describe the optimality conditions for $x$ in the considered scenario (apart from primal optimality), namely the dual feasibility constraints~\eqref{eq:itp:dual} and strong duality~\eqref{eq:itp:strongdual}, we conclude that the system is feasible.
\end{proof}
Note that the values $x_{ij}$ (and thus also $s_i$ and $d_j$) are fixed in system \eqref{eq:prop:weakOpt:sol}, so it is indeed a linear system and the property can be checked in polynomial time by means of linear programming. If the costs are not affected by uncertainty, we can also formulate a linear program to check strong optimality of a solution.

\begin{proposition}
	If $\lb{c} = \ub{c}$, then a given $x \in \RR^{m \times n}$ is a strongly optimal solution of \eqref{eq:itp:unbalanced} if and only if it is strongly feasible and also optimal for the linear program
	\begin{equation}\label{eq:strOpt:c}
		\begin{alignedat}{3}
			\textnormal{minimize } 	& \quad & \omit\rlap{$\displaystyle \sum_{i \in I} \sum_{j \in J} \ub{c}_{ij} x_{ij}$} \\
			\textnormal{subject to } 	&	& \sum_{j \in J} x_{ij} & \le \ub{s}_i, & \qquad & \forall i \in I,  \\
			&	& \sum_{i \in I} x_{ij} & = \ub{d}_j, 	&		& \forall j \in J, \\
			&	& x_{ij} 				& \ge 0, 					&		& \forall i \in I, j \in J.
		\end{alignedat}
	\end{equation}
\end{proposition}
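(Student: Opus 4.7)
The plan is to exploit the monotonicity of the feasible set in the supply vector, together with the fact that strong feasibility forces the demand to be fixed.

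First I would unpack what the two hypotheses give. Since $\lb{c}=\ub{c}$, the cost coefficient is a single vector $c=\ub{c}$, so every scenario has the same objective function $\sum_{i,j} c_{ij} x_{ij}$. By Proposition~\ref{prop:strFeas:sol}, strong feasibility of $x$ implies $\lb{d}=\ub{d}$, so the demand vector $d = \ub{d}$ is also fixed across all scenarios, and $\sum_{j} x_{ij} \le \lb{s}_i$ for all $i\in I$. Thus the only data that actually varies among the scenarios (that we need to care about for strong optimality) is the supply $s \in \ib{s}$, and the feasible set of each scenario is $\mathcal{F}(s,\ub{d})$. The key monotonicity observation is that $s^1 \le s^2$ implies $\mathcal{F}(s^1,\ub{d}) \subseteq \mathcal{F}(s^2,\ub{d})$, so $\mathcal{F}(\ub{s},\ub{d})$ is the largest of these feasible sets; this is precisely the feasible set of the LP~\eqref{eq:strOpt:c}.

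For the ``only if'' direction I would argue that strong optimality of $x$ means $x$ is optimal for every scenario, and in particular for the scenario with supply $\ub{s}$, which is exactly the LP~\eqref{eq:strOpt:c}. For the ``if'' direction I would take an arbitrary scenario with supply $s\in\ib{s}$. Strong feasibility yields $x \in \mathcal{F}(s,\ub{d}) \subseteq \mathcal{F}(\ub{s},\ub{d})$. Since $x$ attains the minimum of the (common) objective over the larger set $\mathcal{F}(\ub{s},\ub{d})$ and still lies in the smaller set, it also attains the minimum over $\mathcal{F}(s,\ub{d})$; hence $x$ is optimal for this arbitrary scenario, i.e., strongly optimal.

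I do not foresee a real obstacle here: once one notes that $\lb{c}=\ub{c}$ fixes the cost, strong feasibility fixes the demand, and the feasible set is monotone in $s$, the proof reduces to the trivial fact that an optimum on a superset, if attained inside a subset, is also an optimum on that subset. The only minor care needed is to invoke Proposition~\ref{prop:strFeas:sol} to justify $\lb{d}=\ub{d}$ and $\sum_j x_{ij}\le\lb{s}_i$ before applying the monotonicity argument.
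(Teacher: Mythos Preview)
Your proposal is correct and follows essentially the same approach as the paper: both arguments use that strong feasibility forces $\lb{d}=\ub{d}$ (via Proposition~\ref{prop:strFeas:sol}) and then exploit the monotonicity $\mathcal{F}(s,\ub{d})\subseteq\mathcal{F}(\ub{s},\ub{d})$ for $s\le\ub{s}$. The only cosmetic difference is that the paper phrases the ``if'' direction as a proof by contradiction (a hypothetical better $x^*$ for some $s$ would also be feasible for $\ub{s}$, contradicting optimality of $x$ there), whereas you state the equivalent fact directly (a minimizer over a superset that lies in a subset is a minimizer over the subset).
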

\begin{proof}
	Obviously, if $x$ is strongly optimal, then it is also strongly feasible and optimal for the scenario $(\ub{c}, \ub{s}, \ub{d})$. 
	
	Conversely, let us now assume that $x$ is strongly feasible and also optimal for program~\eqref{eq:strOpt:c}. Strong feasibility of $x$ implies that $\lb{d} = \ub{d}$ (see Proposition~\ref{prop:strFeas:sol}), so the demand vector in the problem must be fixed. Thus, we can write $\ub{d}$ in the considered scenario \eqref{eq:strOpt:c}. It remains to show that if $x$ is optimal for the scenario $(\ub{c}, \ub{s}, \ub{d})$, then it is also optimal for any other supply vector $s \in \ib{s}$ and the corresponding scenario $(\ub{c}, {s}, \ub{d})$ and therefore it is strongly optimal.
	
	Suppose that $x$ is optimal for the scenario $(\ub{c}, \ub{s}, \ub{d})$, i.e. it is an optimal solution of program~\eqref{eq:strOpt:c}. Assume for contradiction that there exists a supply vector $s \in \ib{s}$ such that $x$ is not optimal for $(\ub{c}, s, \ub{d})$. By strong feasibility of $x$, we also have $x \in \mathcal{F}(s, \ub{d})$. Let $x^*$ denote an optimal solution for $(\ub{c}, s, \ub{d})$. Since $x$ is not optimal, it has a worse objective value, i.e.
	\begin{equation}\label{eq:obj:proof}
		\sum_{i \in I} \sum_{j \in J} \ub{c}_{ij} x^*_{ij} < \sum_{i \in I} \sum_{j \in J} \ub{c}_{ij} x_{ij}.
	\end{equation}
	However, $x^*$ is also feasible for the scenario $(\ub{c}, \ub{s}, \ub{d})$, since we have \[ \sum_{j \in J} x^*_{ij} \le s_i \le \ub{s}_i, \qquad \forall i \in I.\]
	Therefore, we have found a solution $x^*$, which is feasible for $(\ub{c}, \ub{s}, \ub{d})$ and by \eqref{eq:obj:proof} it has a strictly better objective value than $x$. This contradicts the assumption that $x$ is an optimal solution of program~\eqref{eq:strOpt:c}.
\end{proof}

In general, checking strong optimality of a solution is a co-NP-hard problem for interval linear programs, even if we only have interval uncertainty in the objective function. Proposition~\ref{prop:strOpt:sol:poly} presents a necessary and sufficient condition for strong optimality, which is known for interval linear programs (see \cite{Hladik:2017:strongopt}).

\begin{proposition}[from \cite{Hladik:2017:strongopt}]\label{prop:strOpt:sol:poly}
		A given $x \in \RR^{m \times n}$ is a strongly optimal solution of \eqref{eq:itp:unbalanced} if and only if it is strongly feasible and the following interval system is not weakly feasible:
		\begin{equation}
			\begin{alignedat}{3}
			\sum_{i \in I} \sum_{j \in J} [\lb{c}_{ij}, \ub{c}_{ij}] u_{ij} &\le -1,\\
			\sum_{i \in I} u_{ij} &= 0, &&\forall j \in J,\\
			\sum_{j \in J} u_{ij} &\le 0, &&\forall i \in I^*,\\
			u_{ij} &\ge 0, &\qquad&\forall (i,j) \in K^*,
			\end{alignedat}
		\end{equation}
	where $I^* = \{i \in I : \sum_{j \in J} x_{ij} = \ub{s}_i\}$ and $K^* = \{(i,j) \in I \times J : x_{ij} = 0 \}$.
\end{proposition}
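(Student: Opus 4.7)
My plan is to reduce the statement to the standard linear-programming optimality criterion: a feasible point $x$ is optimal for a scenario iff there is no feasible improving direction at $x$, so the interval system in the proposition is precisely the interval analogue of ``there exists a scenario admitting a feasible improving direction at $x$''. To begin, strong optimality implies strong feasibility, so by Proposition~\ref{prop:strFeas:sol} I may assume $\lb{d}=\ub{d}$ and treat the demand vector as fixed across all scenarios. Then, for a fixed scenario $(c,s,d)$ with $c\in\ib{c}$, $s\in\ib{s}$ in which $x$ is feasible, $x$ fails to be optimal iff there exists $u\in\RR^{m\times n}$ with $\sum_{i\in I}\sum_{j\in J}c_{ij}u_{ij}<0$, with $\sum_{i\in I}u_{ij}=0$ for every $j\in J$ (tangency to the demand equalities), with $\sum_{j\in J}u_{ij}\le 0$ for every $i$ whose supply constraint is active, and with $u_{ij}\ge 0$ for every $(i,j)$ with $x_{ij}=0$. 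All constraints apart from the objective inequality are homogeneous, so a positive rescaling lets me replace $c^{T}u<0$ by $c^{T}u\le -1$ without loss.

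The core step will be a containment between scenario-dependent active-index sets. By strong feasibility, $\sum_{j\in J}x_{ij}\le\lb{s}_i$ for every $i$, so for any $s\in\ib{s}$ and any $i\in I^*$ the chain $\ub{s}_i=\sum_{j\in J}x_{ij}\le s_i\le\ub{s}_i$ forces $s_i=\ub{s}_i$, placing $i$ into the scenario-$s$ active-supply set $I_s^*:=\{i:\sum_{j\in J}x_{ij}=s_i\}$ as well. Hence $I^*\subseteq I_s^*$ for every admissible $s$, with equality in the extremal scenario $s=\ub{s}$. The active non-negativity set $K^*$ is already scenario-independent.

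With this in hand the equivalence splits cleanly. If $x$ is not strongly optimal, I pick a scenario $(c,s,d)$ in which $x$ is not optimal together with a feasible improving direction $u$; since $I^*\subseteq I_s^*$, the cone constraints of the interval system indexed by $I^*$ are implied by those indexed by $I_s^*$, so $(u,c)$ witnesses weak feasibility of the stated interval system. Conversely, if the interval system is weakly feasible via some pair $(u,c)$ with $c\in\ib{c}$, then the scenario $(c,\ub{s},d)$ has active-supply set exactly $I^*$ and active-nonnegativity set exactly $K^*$, so $u$ is a feasible improving direction at $x$ there, contradicting optimality of $x$ in that scenario and hence strong optimality.

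The main obstacle I anticipate is making the scenario-to-system correspondence precise, in particular justifying that it suffices to test the single upper-bound scenario $s=\ub{s}$ rather than ranging over all $s\in\ib{s}$; this reduction is what the inclusion $I^*\subseteq I_s^*$ established above buys. The remaining bookkeeping, including the passage between the strict inequality $c^{T}u<0$ and its normalized form $c^{T}u\le -1$, is routine once the cone structure of feasible directions has been set up.
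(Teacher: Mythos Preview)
The paper does not supply its own proof of this proposition: it is stated with attribution to \cite{Hladik:2017:strongopt} and no argument is given. Your proposal therefore cannot be compared against a proof in the paper, but it can be evaluated on its own merits.

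Your argument is sound. The reduction to feasible improving directions is the right viewpoint: for a polyhedral feasible set, a feasible point is non-optimal for a linear objective $c$ iff the tangent cone at that point contains a direction with $c^{T}u<0$, and the tangent-cone constraints are exactly the equalities from the demand constraints, the inequalities from the active supply rows, and the sign constraints from the active non-negativity bounds. The key step---showing $I^*\subseteq I_s^*$ for every $s\in\ib{s}$ via strong feasibility, with equality at $s=\ub{s}$---is correctly argued: if $i\in I^*$ then $\ub{s}_i=\sum_{j}x_{ij}\le\lb{s}_i\le s_i\le\ub{s}_i$ forces $s_i=\ub{s}_i$. This inclusion is precisely what lets you pass from an improving direction in an arbitrary scenario to weak feasibility of the stated interval system (since the $I^*$-indexed constraints are a subset of the $I_s^*$-indexed ones), while the reverse implication uses the extremal scenario $s=\ub{s}$ where the index sets coincide. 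The normalization from $c^{T}u<0$ to $c^{T}u\le-1$ by positive scaling is valid because all other constraints are homogeneous. Nothing is missing.
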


Weak and strong optimality of the interval transportation problem, i.e. checking whether some/each scenario has an optimal solution, can be reduced to the problem of checking weak and strong feasibility. This is possible thanks to the fact that the transportation problem with non-negative costs and non-negative variables always has an objective value bounded below by $0$.
\begin{proposition}
	Problem \eqref{eq:itp:unbalanced} is weakly optimal if and only if it is weakly feasible.
	Furthermore, problem \eqref{eq:itp:unbalanced} is strongly optimal if and only if it is strongly feasible.
\end{proposition}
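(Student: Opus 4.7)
The plan is to reduce both equivalences to the fundamental theorem of linear programming by exploiting two structural features that are baked into the formulation of \eqref{eq:itp:unbalanced}: the cost intervals $\ib{c}_{ij}$ are non-negative, so $c \ge 0$ for every $c \in \ib{c}$, and the decision variables satisfy $x \ge 0$ on every feasible region $\mathcal{F}(s,d)$. Together these give $\sum_{i \in I}\sum_{j \in J} c_{ij} x_{ij} \ge 0$ on the feasible region of any scenario, so the objective is bounded below by $0$.

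The ``optimal implies feasible'' direction of each equivalence is immediate, since an optimal solution of some (respectively every) scenario is in particular feasible for that scenario. For the converse direction I would invoke the standard fact from linear programming that a feasible LP with an objective bounded below attains its infimum. For any fixed scenario $(c,s,d) \in (\ib{c},\ib{s},\ib{d})$ with $\mathcal{F}(s,d) \neq \emptyset$, the observation from the previous paragraph gives a lower bound of $0$ on the objective, so an optimal solution exists. Applied to a single feasible scenario supplied by weak feasibility, this yields weak optimality; applied uniformly to every scenario using strong feasibility, this yields strong optimality.

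Since both ingredients---non-negativity of the costs together with the attainment lemma for bounded linear programs---follow directly from the problem data and classical LP theory, I do not expect any real obstacle. The only point worth stating carefully is that weak feasibility of \eqref{eq:itp:unbalanced} guarantees the existence of \emph{some} scenario $(s,d) \in (\ib{s},\ib{d})$ with $\mathcal{F}(s,d)\neq\emptyset$, and that in both the weak and the strong case the cost vector $c \in \ib{c}$ can be chosen arbitrarily because the bound $c \ge 0$ used in the argument holds throughout $\ib{c}$.
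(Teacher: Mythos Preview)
Your proposal is correct and follows exactly the reasoning the paper indicates: the paper does not give a formal proof of this proposition but simply notes, just before stating it, that the reduction to feasibility works ``thanks to the fact that the transportation problem with non-negative costs and non-negative variables always has an objective value bounded below by $0$.'' Your write-up spells out this argument carefully, including the attainment of the infimum via standard LP theory, which is precisely what the paper leaves implicit.
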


Note that while weak optimality of an interval program is equivalent to the existence of a weakly optimal solution, an analogous property does not hold for strong optimality. An interval program is strongly optimal if each scenario has an optimal solution, but the optimal solution may be different for each scenario and a common strongly optimal solution might not exist. 

\subsection{Optimal Value Range}
Finally, we address the problem of computing the best and the worst optimal value of an interval transportation problem. Computing the best optimal value~$\lb{f}$ only requires solving a single linear program. As shown in Proposition~\ref{prop:BestVal}, the linear program minimizes the best objective $\lb{c}$ over the set of all weakly feasible solutions.
\begin{proposition}[\cite{Liu:2003:ITP}]\label{prop:BestVal}
	The best optimal value $\lb{f}$ of \eqref{eq:itp:unbalanced} can be computed as the optimal value of the linear program
	\begin{equation}
		\begin{alignedat}{3}
			&\textnormal{minimize } &&\sum_{i \in I} \sum_{j \in J} \lb{c}_{ij} x_{ij} \\
			&\textnormal{subject to } &&\eqref{eq:prop:weakFeas}.
		\end{alignedat}
	\end{equation}			
\end{proposition}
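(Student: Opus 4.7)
The plan is to prove both inequalities $\lb{f} \le f^*$ and $\lb{f} \ge f^*$, where $f^*$ denotes the optimal value of the proposed linear program, whose feasible region is exactly the set of weakly feasible solutions by Proposition~\ref{prop:weakFeas:sol}.

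For the direction $f^* \le \lb{f}$, I would take an arbitrary scenario $(c,s,d) \in (\ib{c},\ib{s},\ib{d})$ with a feasible solution and let $x$ be its optimal solution with value $f(c,s,d)$. Since $\sum_{j} x_{ij} \le s_i \le \ub{s}_i$ and $\lb{d}_j \le d_j = \sum_{i} x_{ij} \le \ub{d}_j$, the point $x$ satisfies system~\eqref{eq:prop:weakFeas}, so it is feasible for the linear program in the statement. Using $\lb{c} \le c$ and $x \ge 0$, one has $\sum_{i,j} \lb{c}_{ij} x_{ij} \le \sum_{i,j} c_{ij} x_{ij} = f(c,s,d)$, hence $f^* \le f(c,s,d)$. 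Taking the infimum over all feasible scenarios gives $f^* \le \lb{f}$ (infeasible scenarios contribute $+\infty$ and can be ignored).

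For the direction $\lb{f} \le f^*$, I would take an optimal solution $x^*$ of the linear program and construct a witness scenario. Define $d^*_j := \sum_{i \in I} x^*_{ij}$, which lies in $\ib{d}_j$ by~\eqref{eq:prop:weakFeas}, and $s^*_i := \max\{\lb{s}_i, \sum_{j \in J} x^*_{ij}\}$, which lies in $\ib{s}_i$ since $\sum_{j} x^*_{ij} \le \ub{s}_i$. Choosing costs $c = \lb{c}$ yields a valid scenario $(\lb{c}, s^*, d^*)$ for which $x^*$ is feasible by construction, so $f(\lb{c}, s^*, d^*) \le \sum_{i,j} \lb{c}_{ij} x^*_{ij} = f^*$. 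Therefore $\lb{f} \le f^*$, and combined with the previous inequality we conclude $\lb{f} = f^*$.

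The proof is essentially a direct scenario–LP comparison, so there is no real technical obstacle. The only minor subtlety is the explicit construction in the second direction: one must choose the scenario so that $x^*$ remains feasible while using the componentwise smallest costs $\lb{c}$; picking $d^* = \sum_i x^*$ exactly and lifting $s^*$ just enough above $\lb{s}$ to accommodate the row sums of $x^*$ is the natural choice, and the bounds in~\eqref{eq:prop:weakFeas} guarantee this selection stays inside $\ib{s}$ and $\ib{d}$. The boundary case where the ITP has no weakly feasible solution is handled uniformly: the LP is then infeasible and $\lb{f} = +\infty$ by convention.
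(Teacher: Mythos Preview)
Your proof is correct. Note, however, that the paper does not give its own proof of this proposition: it is stated as a known result attributed to \cite{Liu:2003:ITP}, so there is no in-paper argument to compare against. The two-inequality argument you give---bounding $f^*$ above by any scenario's optimum via $\lb{c}\le c$ and weak feasibility, and bounding $\lb{f}$ above by $f^*$ via the witness scenario $(\lb{c},s^*,d^*)$ with $s^*_i=\max\{\lb{s}_i,\sum_j x^*_{ij}\}$ and $d^*_j=\sum_i x^*_{ij}$---is the standard proof and is entirely in line with how the paper handles the analogous constructions elsewhere (e.g.\ the same choice of $s_i$ and $d_j$ appears in Proposition~\ref{prop:weakOpt:sol}).
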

The worst (possibly infinite) value $\ub{f}$ can also be computed using linear programming. If there is an infeasible scenario of the ITP, then by definition we have $\ub{f} = \infty$. If the ITP is strongly feasible, it can be again shown that solving a~single scenario is sufficient.

\begin{proposition}[\cite{Xie:2017:ITP}]\label{prop:WorstVal}
	If the problem \eqref{eq:itp:unbalanced} is not strongly feasible, then we have $\ub{f} = \infty$. Otherwise, $\ub{f} = \ubf$ can be computed as the optimal value of the linear program
		\begin{equation}
		\begin{alignedat}{3}
			&\textnormal{minimize } &&\sum_{i \in I} \sum_{j \in J} \ub{c}_{ij} x_{ij} \\
			&\textnormal{subject to } &&\eqref{eq:prop:strFeas}.
		\end{alignedat}
	\end{equation}	
\end{proposition}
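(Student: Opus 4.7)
The plan is to handle the two cases separately. In the first case, if the problem is not strongly feasible then, by the characterization of strong feasibility of the problem given earlier in this subsection, $\sum_{i} \lb{s}_i < \sum_{j} \ub{d}_j$, so the particular scenario $(c, \lb{s}, \ub{d})$ for any $c \in \ib{c}$ is infeasible; by the convention $f(c,s,d) = \infty$ on infeasible scenarios, the supremum defining $\ub{f}$ is $\infty$.

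For the strongly feasible case, the guess is that the worst scenario is $(\ub{c}, \lb{s}, \ub{d})$: the largest costs, the tightest supply, and the largest demand. The proposed linear program is precisely this scenario, and it is feasible thanks to $\sum_{i} \lb{s}_i \ge \sum_{j} \ub{d}_j$. Since $(\ub{c}, \lb{s}, \ub{d}) \in (\ib{c}, \ib{s}, \ib{d})$, its optimal value contributes to the supremum, giving the easy inequality $\ub{f} \ge f(\ub{c}, \lb{s}, \ub{d})$.

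The main work is the reverse inequality: for every scenario $(c,s,d)$ in the interval data, one must show $f(c,s,d) \le f(\ub{c}, \lb{s}, \ub{d})$. Monotonicity in $c$ is immediate from non-negativity of the variables, and monotonicity in $s$ follows from the set inclusion $\mathcal{F}(\lb{s}, d) \subseteq \mathcal{F}(s, d)$. The delicate point is the demand coordinate: replacing $d$ by $\ub{d}$ changes an equality constraint, so the feasible sets $\mathcal{F}(s, \ub{d})$ and $\mathcal{F}(s, d)$ are not nested, and this is where I expect the main difficulty.

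I would resolve this by a multiplicative rescaling argument exploiting non-negativity of costs and variables. Given an optimal $\hat{x}$ for the scenario $(\ub{c}, \lb{s}, \ub{d})$, define $x'_{ij} = \hat{x}_{ij}\, d_j / \ub{d}_j$ whenever $\ub{d}_j > 0$, and $x'_{ij} = 0$ otherwise (which forces $d_j = 0$ and is consistent with $\hat{x}_{ij} = 0$). A direct check gives $\sum_{i} x'_{ij} = d_j$ and $\sum_{j} x'_{ij} \le \sum_{j} \hat{x}_{ij} \le \lb{s}_i \le s_i$, so $x'$ is feasible for $(c, s, d)$. Since $d_j / \ub{d}_j \le 1$ and $c_{ij} \le \ub{c}_{ij}$, the cost satisfies $\sum_{i,j} c_{ij} x'_{ij} \le \sum_{i,j} \ub{c}_{ij} \hat{x}_{ij} = f(\ub{c}, \lb{s}, \ub{d})$, hence $f(c,s,d) \le f(\ub{c}, \lb{s}, \ub{d})$. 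Combining both directions yields $\ub{f} = f(\ub{c}, \lb{s}, \ub{d})$, and since every scenario is feasible under the strong feasibility assumption, $\ub{f} = \ubf$ as well.
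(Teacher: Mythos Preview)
The paper does not give its own proof of this proposition; it is stated with attribution to \cite{Xie:2017:ITP} and left unproved, so there is nothing to compare against directly.

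Your argument is correct. The infeasibility of the scenario $(\lb{s},\ub{d})$ when $\sum_i \lb{s}_i < \sum_j \ub{d}_j$ and the lower bound $\ub{f}\ge f(\ub{c},\lb{s},\ub{d})$ are immediate. The only substantive step is the reverse inequality, and your column-wise rescaling $x'_{ij}=\hat{x}_{ij}\,d_j/\ub{d}_j$ handles it cleanly: it converts $\sum_i \hat{x}_{ij}=\ub{d}_j$ into $\sum_i x'_{ij}=d_j$, preserves the supply constraint because each scaling factor is at most $1$ (so $\sum_j x'_{ij}\le\sum_j \hat{x}_{ij}\le\lb{s}_i\le s_i$), and does not increase cost thanks to $c\le\ub{c}$ and $x\ge 0$. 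The degenerate case $\ub{d}_j=0$ is also treated correctly. Finally, strong feasibility makes every scenario feasible, so $\ub{f}=\ubf$ is automatic.
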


Methods for computing the worst finite optimal value $\ubf$ for interval transportation problems that are not strongly feasible will be further discussed in Section~\ref{sec:worstFinVal}. Regarding the complexity of the corresponding decision problem, it was proved to be NP-hard for linear programs with interval right-hand-side vectors, but it is decidable in polynomial time for problems, in which intervals are only present in the objective vector. By the results of \cite{Hladik:2018:FinWorstCase}, we obtain the characterization in Proposition~\ref{prop:worstFinVal} for transportation problems with interval costs.
\begin{proposition}[from \cite{Hladik:2018:FinWorstCase}]\label{prop:worstFinVal}
	If $\lb{d} = \ub{d}$ and $\lb{s} = \ub{s}$, then $\ubf$ of \eqref{eq:itp:unbalanced} can be computed as the optimal value of the linear program
	\begin{equation}
		\begin{alignedat}{3}
			&\textnormal{maximize } &\sum_{i \in I} \ub{s}_i u_i + \sum_{j \in J} \ub{d}_j v_j\\
			&\textnormal{subject to } &\textnormal{constraints }\eqref{eq:prop:weakFeas},\\
			& &u_i + v_j \le \ub{c}_{ij}, &\qquad\forall i \in I, j \in J,\\
			& &u_i \le 0,\ \, &\qquad\forall i \in I.
		\end{alignedat}
	\end{equation}	
\end{proposition}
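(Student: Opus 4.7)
The plan is to reduce $\ubf$, under the assumptions $\lb{s}=\ub{s}$ and $\lb{d}=\ub{d}$, to the optimal value of a single classical transportation problem with fixed worst-case data, and then to recognize the LP in the statement as its LP dual paired with a decoupled primal-feasibility certificate.

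First, since supply and demand are fixed, every scenario of~\eqref{eq:itp:unbalanced} shares the same feasible polytope $\F(\ub{s}, \ub{d})$, so the interval program has a feasible scenario iff this single polytope is nonempty; when it is empty $\ubf$ is undefined, so I assume $\F(\ub{s},\ub{d}) \ne \emptyset$. Under this assumption $\ubf = \max_{c \in \ib{c}} f(c, \ub{s}, \ub{d})$. I would then argue that this maximum is attained at $c = \ub{c}$: for any fixed $x \in \F(\ub{s},\ub{d})$ the objective $\sum_{ij} c_{ij} x_{ij}$ is componentwise nondecreasing in $c$ because $x \ge 0$, so the minimum over $x \in \F(\ub{s},\ub{d})$ is also nondecreasing in $c$ and hence maximized at $c = \ub{c}$. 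Therefore $\ubf$ equals the optimal value of the single transportation problem with data $(\ub{c}, \ub{s}, \ub{d})$.

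Next I would invoke LP duality on this classical transportation problem. Its dual follows the template~\eqref{eq:itp:dual} with the fixed data substituted in, yielding exactly the objective $\sum_i \ub{s}_i u_i + \sum_j \ub{d}_j v_j$ and the constraints $u_i + v_j \le \ub{c}_{ij}$, $u_i \le 0$ that appear in the statement. Strong duality applies because the primal is feasible (by assumption) and bounded below by $0$ (both costs and variables are nonnegative); hence the optimal dual value equals the primal optimum, which is $\ubf$.

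It remains to explain the extra constraints~\eqref{eq:prop:weakFeas} in the LP of the statement. Since $x$ appears nowhere else in the program, these constraints decouple from the $(u,v)$ maximization and reduce to a pure feasibility test for $\F(\ub{s},\ub{d})$ (noting that under $\lb{s}=\ub{s}$, $\lb{d}=\ub{d}$ they are precisely the defining constraints of this polytope). Including them therefore leaves the optimum unchanged whenever $\F(\ub{s},\ub{d})\ne\emptyset$, while ruling out the degenerate case in which the primal is infeasible and the dual becomes unbounded (making $\ubf$ undefined). The only real content of the argument is the monotonicity reduction to $c = \ub{c}$; the remainder is standard LP duality, and the main thing to watch is keeping the empty-polytope case properly separated so that $\ubf$ remains well-defined.
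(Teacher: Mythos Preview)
The paper does not actually prove this proposition; it is attributed to \cite{Hladik:2018:FinWorstCase} and stated without proof. Your argument is correct: the monotonicity reduction to $c=\ub{c}$ together with strong LP duality for the resulting classical transportation problem yields $\ubf$, and your handling of the extra $x$-constraints~\eqref{eq:prop:weakFeas} as a decoupled feasibility certificate (ruling out the infeasible-primal/unbounded-dual case) is sound.
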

Specifically for the interval transportation problems, it was recently proved by \cite{Hoppmann:2021:MinCostFlow} that computing $\ubf$ is NP-hard for the \eqref{eq:itp:balanced} formulation. The complexity of \eqref{eq:itp:unbalanced} for problems that are not strongly feasible remains open.

\section{Computing the Worst Finite Optimal Value}\label{sec:worstFinVal}
Computing the worst finite optimal value $\ubf$ of an interval transportation problem is a challenging task, in general. The methods for computing $\ubf$ available in the literature are mainly designed to find an approximation of the exact value, which can be NP-hard to compute at least for some classes of instances.

\subsection{Overview of Algorithms for Computing $\ubf$}
\cite{Juman:2014:ITP} proposed a heuristic solution technique to find a~bound on $\ubf$ for an extended model with inventory costs. \cite{Xie:2017:ITP} designed a permutation heuristic genetic algorithm and proved that $\ubf$ can be computed exactly in polynomial time for strongly feasible instances of \eqref{eq:itp:unbalanced}, as stated in Proposition~\ref{prop:WorstVal}. Finally, \cite{Cerulli:2017:BestWorstValues} introduced a heuristic algorithm for computing the worst finite optimal value based on an iterative local search method. 

Apart from strongly feasible instances of \eqref{eq:itp:unbalanced}, another interesting class of interval transportation problems that was investigated are problems with a~cost matrix immune against the transportation paradox. \cite{DAmbrosio:2020:optimalvaluerange} derived stronger theoretical properties of the worst optimal value on the class of immune problems and proposed a more powerful algorithm to approximate the value. Later, \cite{Carrabs:2021:IITP} designed a heuristic approach based on some polyhedral properties of the problem and introduced a mixed-integer linear programming formulation to compute the value exactly.

Here, we aim to design an exact method for computing the worst finite optimal value $\ubf$ for general interval transportation problems without any further assumptions. We present the method for the \eqref{eq:itp:unbalanced} formulation, however, it is easy to adapt it to the equation-constrained \eqref{eq:itp:balanced} formulation and also to other classes of interval linear programs. Another exact approach to computing $\ubf$ was suggested by \cite{Liu:2003:ITP}, who derived a linearly constrained bilinear program based on duality. Unfortunately, finding a globally optimal solution of the nonlinear program seems to be quite difficult even for very small problems (as was also illustrated by \cite{Juman:2014:ITP}).

\subsection{Decomposition by Complementary Slackness}
In~\cite{Garajova:2020:BestWorstITP}, we proposed a method for computing $\ubf$, based on a~decomposition by complementary slackness for describing the set of all weakly optimal solutions of~\eqref{eq:itp:unbalanced}. Using the decomposition, we can compute $\ubf$ by optimizing the worst objective with the highest cost values
\[ \textnormal{maximize }\sum_{i \in I} \sum_{j \in J} \ub{c}_{ij} x_{ij}\]
over the sets of optimal solutions described by linear systems in the form
\begin{subequations}\label{eq:worst:decomp}
	\begin{align}
		\sum_{j \in J} x_{ij} \le \ub{s}_i, &  \quad& & \forall i \in I, \label{eq:decomp:1}\\
		\lb{d}_j \le \sum_{i \in I} x_{ij} \le \ub{d}_j,	& \quad& & \forall j \in J, \label{eq:decomp:2}\\
		u_i + v_j \le \ub{c}_{ij}, &\quad x_{ij} = 0, 		&\quad &\forall	(i,j) \in K, \label{eq:decomp:3}\\
		u_i + v_j = \ub{c}_{ij},  &\quad x_{ij} \ge 0, 	& \quad &\forall	(i,j) \notin K, \label{eq:decomp:4}\\
		\sum_{j \in J} x_{ij} \ge \lb{s}_i,\ & \quad\ u_i \le 0, \ & &\forall i \in L, \label{eq:decomp:5}\\[-10pt]
		& \quad\ u_i = 0, \ &\qquad&\forall i \notin L, \label{eq:decomp:6}
	\end{align}
\end{subequations}
for all subsets $K \subseteq I \times J$ and $L \subseteq I$ (in fact, the number of programs can be reduced by considering only basic optimal solutions). If we denote by $f_{KL}$ the optimal value of such linear program for particular subsets $K$ and $L$, then the worst finite optimal value can be computed as
\[
\ubf = \max\, \{ f_{KL} : K \subseteq I \times J, \, L \subseteq I \}.
\]
Constraints \eqref{eq:decomp:1}--\eqref{eq:decomp:2} ensure primal feasibility of the solution $x$, the remaining constraints \eqref{eq:decomp:3}--\eqref{eq:decomp:6} then ensure dual feasibility of the solution $(u,v)$ and complementary slackness, which replaces the strong duality condition~\eqref{eq:itp:strongdual}.

However, instead of explicitly decomposing the problem into an exponential number of linear programs, we can also formulate the complementary slackness constraints as implications, namely
\begin{align*}
	x_{ij} > 0 \ \Rightarrow\  u_i + v_j = \ub{c}_{ij}, \quad\text{ and, }\quad
	\sum_{j \in J} x_{ij} < \lb{s}_i \ \Rightarrow\  u_i = 0.
\end{align*}
These implications can also be modeled as indicator constraints, which can be handled seamlessly by nowadays solvers.

Then, we can compute $\ubf$ by solving the optimization problem
\begin{subequations}\label{eq:worst:decomp:indicator}
	\begin{align}
		\textnormal{maximize}&&\sum_{i \in I} \sum_{j \in J} \ub{c}_{ij} x_{ij}\notag\\ 
		\textnormal{subject to}&&\sum_{j \in J} x_{ij} &\le \ub{s}_i, &  & \forall i \in I,\label{eq:decomp:ind:1}\\
		&&\lb{d}_j \le \sum_{i \in I} x_{ij} &\le \ub{d}_j,	& & \forall j \in J,\label{eq:decomp:ind:2}\\[-5pt]
		&&x_{ij} &\ge 0, & & \forall i \in I, j \in J,\label{eq:decomp:ind:3}\\
		&&u_i + v_j &\le \ub{c}_{ij}, 		& &\forall i \in I, j \in J,\label{eq:decomp:ind:4}\\
		&&u_i &\le 0, 	&  &\forall	i \in I,\label{eq:decomp:ind:5}\\
		&&x_{ij} > 0 \ &\Rightarrow\  u_i + v_j = \ub{c}_{ij}, &  & \forall i \in I, j \in J,\label{eq:decomp:ind:6}\\
		&&\sum_{j \in J} x_{ij} < \lb{s}_i \ &\Rightarrow\  u_i = 0, &  & \forall i \in I,\label{eq:decomp:ind:7}
	\end{align}
\end{subequations}
where the constraints \eqref{eq:decomp:ind:1}--\eqref{eq:decomp:ind:3} ensure primal feasibility of $x$, the constraints \eqref{eq:decomp:ind:4}--\eqref{eq:decomp:ind:5} ensure dual feasibility of $(u,v)$ and the remaining constraints \eqref{eq:decomp:ind:6}--\eqref{eq:decomp:ind:7} ensure complementary slackness.

\subsection{Initial Feasible Solution}
We also provide the solver with an initial feasible solution of \eqref{eq:worst:decomp}, i.e. a weakly optimal solution of the ITP. Such a feasible solution is encoded via variables $x, u, v$ and by index sets $K$ and $L$. To construct the initial solution we simply select an initial scenario $(s,d)$, optimize \eqref{eq:itp:unbalanced} for this scenario (this yields $x$ as primal variables and $u,v$ as dual ones) and build the index sets such that the conditions of \eqref{eq:worst:decomp} are met. 

We can assume that the problem instance is weakly feasible, otherwise there is no worst finite optimal value. Note that weak feasibility can be easily checked by verifying $\sum_{i=1}^m \ub{s}_i \ge \sum_{j=1}^n \lb{d}_j$. Among all feasible scenarios, we select the one with the maximal amount of transported goods. Let us set  \[ g = \min \left\{\sum_{i=1}^m \ub{s}_i,\,\sum_{j=1}^n \ub{d}_j \right\}, \]
which represents the maximal demand that can be satisfied by the available supply.
Then, we consecutively set for all $i \in \{1,\ldots,m\}$ and for all $j \in \{1,\ldots,n\}$ the values of supply and demand as
\begin{align*}
    s_i &= \max \left\{ \lb{s}_i,\  \min \left\{ \ub{s}_i, g- \sum_{k=1}^{i-1} s_k - \sum_{k=i+1}^m \lb{s}_k\right\}\right\},\\
    d_j &= \min\left\{\ub{d}_j,\  g-\sum_{k=1}^{j-1} d_k - \sum_{k=j+1}^n \lb{d}_k\right\}.
\end{align*}
In other words, we find the largest index $j^*$, such that when we set the demand at the first $({j^*}{}\!-\!{}1)$ nodes to the highest value $\ub{d}_j$, there is sufficient supply to cover the lowest possible demand $\lb{d}_j$ at the remaining nodes. Then, we set
\begin{align*}
&d_j = \ub{d}_j \quad \text{ for } j \in \{1, \dots, j^*-1\},\\
&d_j = \lb{d}_j \quad \text{ for } j \in \{j^*+1, \dots, n\},\\
&d_{j^*} = g - \sum_{k=1}^{j^*\!-1} \ub{d}_k - \sum_{k=j^*\!+1}^n \lb{d}_k.
\end{align*}
Analogously, we try to set the supply at the nodes to exactly satisfy the demand~$g$, however, if $\sum_{k=1}^m \lb{s}_k > \sum_{k=1}^n \ub{d}_k$, then the total supply is set to a~greater value than $g$.

\section{Computational Experiments}\label{sec:experiment}
We started from our previous experiments presented in \cite{garajova:2021:ExactMethodWorst}. We broaden the experiments and test the improved implementation of the exact method. One of the changes is that we supply the solver with an initial solution to reduce the time to obtain a high-quality solution.

\subsection{Implementation of the Exact Method}
Our method was implemented in Python 3.9. We used Gurobi 9.5 to solve the corresponding mathematical models. It turned out that the most efficient way to model the decomposition in \eqref{eq:decomp:3}-\eqref{eq:decomp:6}, i.e. the complementary slackness, is to use indicator constraints, which are natively supported by Gurobi. We also tested the explicit decomposition into linear programs and a mixed-integer programming reformulation of the decomposition without any special constraints, however, indicator constraints outperformed both of them.
The Python code is available from authors on request.

The experiment was carried out on a computer with a 16\,GB RAM and an AMD Ryzen 7 5800X processor. 

\subsection{Instances}
We used two datasets from \cite{Xie:2017:ITP} comprising $65$ instances of interval transportation problems of different sizes. In the following descriptions, the sizes denote the number of supply centers $m$ and the number of destinations~$n$, rather than the total number of constraints ($n+m$) and variables ($n \times m$) in the resulting model. 

The first smaller dataset is composed of five interval transportation problems of five different sizes $3\times 5$, $4\times 6$, $5 \times 10$, $10 \times 10$ and $20 \times 20$ taken from~\citet[Tables 2--11]{Xie:2017:ITP}. In all instances, the supply and demand interval vectors $\ib{s}$ and $\ib{d}$ are chosen so that $\ub{s} = 2\lb{s}$ and $\ub{d} = 2\lb{d}$.

The second dataset consists of $60$ benchmark instances of six different sizes, which were used by \citet[Tables 13--18]{Xie:2017:ITP} and \citet[Table 1]{Cerulli:2017:BestWorstValues}. The instances in dataset 2 have the same supply and demand intervals as the instances of corresponding size from dataset 1, while the unit transportation costs were generated randomly.

\subsection{Results and Discussion: Dataset 1}
In the first experiment, we tested the efficiency and quality of the proposed method (denoted in the results as MILP) against the heuristic algorithms designed by \cite{Liu:2003:ITP}, \cite{Juman:2014:ITP} and \cite{Xie:2017:ITP}. The different methods for computing the worst finite optimal value $\ubf$ were compared on the smaller dataset of five interval transportation problems of five different sizes up to $20 \times 20$. Results of the experiment are shown in Table~\ref{tab:2}.

For the benchmark instances with up to $10$ supply centers and up to $10$ destinations, the solver was able to find the exact optimal solution for the MILP method almost instantly. In all of these instances, the solution found was strictly better (with a higher objective value) than the solutions provided by the fast algorithms of \cite{Liu:2003:ITP} and \cite{Juman:2014:ITP}. The heuristic method by \cite{Xie:2017:ITP} was able to compute the same solutions as MILP, however, the required computation time was longer. Thus, for the smaller instances, the MILP method can be used to compute the value $\ubf$ exactly and very quickly.

Due to the complexity of the problem, it can be expected that the price for a~high-quality solution rises when the number of supply centers and destinations is increased. For the $20\times 20$ instance, the solver was unable to find a provably optimal solution of MILP (i.e., the exact value of $\ubf$) within the time limit of $25$ minutes. Nevertheless, after only $5$ seconds of computations, the solver returned a solution with the value $9425$, which is already higher than the other solutions found by the competing heuristic algorithms.



\defcitealias{Juman:2014:ITP}{Juman \& H.}
\begin{table}[t]
	\caption{The obtained worst finite optimal values and running times (in seconds) of the four considered methods on instances of $5$ different sizes from~\cite{Xie:2017:ITP}.}\label{tab:2}
	\begin{tabular*}{\textwidth}{@{\quad}c>{\raggedleft}p{0.9cm}@{\ \ }p{0.9cm}>{\raggedleft}p{1.1cm}@{\ \ }p{1.1cm}>{\raggedleft}p{0.8cm}@{\ \ }p{0.8cm}>{\raggedleft}p{1.1cm}@{\ \ }p{1.1cm}}				
		\hline
		$m\times n$&	\multicolumn{2}{c}{MILP} &	\multicolumn{2}{c}{\cite{Xie:2017:ITP}} & \multicolumn{2}{c}{\cite{Liu:2003:ITP}} & \multicolumn{2}{c}{\citetalias{Juman:2014:ITP} \citeyearpar{Juman:2014:ITP}}\\ \hline
		$3\times5$					&	9555 &	(0.05\,s)	 &	9555&	\hphantom{0}(10.33\,s)&	8615&	(0.04\,s)&	8615&	(0.69\,s)\\
		$4\times6$					&	15060&	(0.05\,s)	 &	15060&	\hphantom{0}(13.07\,s)&	13235&	(0.03\,s)&	13235&	(6.84\,s)\\
		$\hphantom{0}5\times10$		&	13175&	(0.09\,s)	 &	13175&	\hphantom{0}(20.03\,s)&	9420&	(0.03\,s)&	9330&	(1.05\,s)\\
	    $10\times10$				&	26350&	(0.49\,s)	 &	26350&	\hphantom{0}(26.92\,s)&	18840&	(0.04\,s)&	18600&	(1.05\,s)\\
		$20\times20$				&	9425&	(1500\,s)\footnotemark[1]&	9320&	(362.52\,s)&	9070&	(0.03\,s)&	8210&	(4.08\,s)\\ \hline
	\end{tabular*}	
	\footnotetext[1]{The computation was terminated after $25$ minutes. The solution $9425$ was found after $5$\,s, but the solver was not able to decide its optimality.}
\end{table}	

\subsection{Results and Discussion: Dataset 2}
In the second experiment, we compared our method to the algorithm by \cite{Xie:2017:ITP}, which provided the best-quality solution from the heuristic methods tested in the former experiment, and against the newer local search algorithm by \cite{Cerulli:2017:BestWorstValues}. Here, the computation of $\ubf$ was performed on the larger set of $60$ benchmark instances. The results of the second experiment are shown in Table~\ref{tab:3}. The values computed by the other two algorithms are taken from~\cite{Xie:2017:ITP} and \cite{Cerulli:2017:BestWorstValues}.

The running time of our exact method was negligible for instances up to size $10 \times 10$. Hence, the obtained solutions for these instances were proven to be optimal. For instances of size $20 \times 20$, we set the time limit to 1500 seconds. It turns out that the exact MILP formulation gives the best overall results (even if we lower the time limit significantly), followed by the heuristic algorithm of \cite{Cerulli:2017:BestWorstValues}. The boldly-typeset objective values are those that are best for a particular instance (except for instances where all the obtained values are the same). The heuristic from \cite{Cerulli:2017:BestWorstValues} was able to find a better solution for one $20 \times 20$ instance (instance id 2). On the other hand, there are multiple instances for which the MILP algorithm dominates.

Table~\ref{tab:3} also gives evidence about occurrence of the \emph{transportation paradox} in the instances. This is illustrated by the best solutions returned by the MILP method. For every instance, it holds that $\sum_{i=1}^m \ub{s}_i = \sum_{j=1}^n \ub{d}_j$. Also, all instances of the same size have the same value of $\sum_{j=1}^n \ub{d}_j$. Table \ref{tab:3} provides this number in the very first column for each group of instances. Then, the column labeled by $\sum d$ gives evidence of the amount of transported goods in the optimal scenario (or, in case of $20 \times 20$ instances, the best scenario found). 
It is worth noting that the case $\sum d < \sum \ub{d}$ occurs very often. So, the transportation paradox is a very common phenomenon in this class of instances. 


The last column of Table \ref{tab:3} (labeled time) gives the elapsed time at which the best solution for an instance is found. This is interesting only for $20 \times 20$ instances, as for the smaller instances an optimal solution was found. We can see that the maximum time to find the best solution is quite short (151 seconds) compared to the available computation time restricted by the time limit of 1500 seconds.

\begin{sidewaystable}
	\footnotesize
	\sidewaystablefn
	\newcommand{\STAB}[1]{\begin{tabular}{@{}c@{}}#1\end{tabular}}	
	\caption{Columns Xie, Cerulli and MILP: the obtained worst finite optimal value of the three considered methods on $60$~instances of the interval transportation problem of $6$ different sizes from~\cite{Xie:2017:ITP}. Column $\sum d$ refers to the total amount of goods transferred in the worst scenario. The column time shows the time required to find the reported solution (in seconds).}\label{tab:3}
	\begin{center}
	\begin{tabular}{@{\hskip20pt}ccccccc@{\hskip10pt}|@{\hskip20pt}cccccccc@{\hskip20pt}}								
		\hline
		&id&	Xie&	Cerulli&	MILP&$\sum d$&	&&id&Xie&	Cerulli&	MILP&$\sum d$& time	(s) \\
		\hline
		\multirow{10}{*}{\STAB{\rotatebox[origin=c]{90}{\begin{minipage}{2cm}size: $2\times3$\newline $\sum \ub{d}$: 270 \end{minipage}}}}&1&	22800&	22800&	22800& 270& &	\multirow{10}{*}{\STAB{\rotatebox[origin=c]{90}{\begin{minipage}{2cm}size: $5\times10$\newline $\sum \ub{d}$: 1090 \end{minipage}}}}&1&	25760&\bf	25810&\bf	25810 &910 &
		\\
		&2&	27390&	27390&	27390&	210&&&2&	24980&	\bf 25055&	\bf 25055&795&	\\
		&3&	27390&	27390&	27390&	210&&&3&	19635&	19635&	19635&865&	\\
		&4&	27210&	27210&	27210&	240&&&4&	30260&	30260&	30260&955&	\\
		&5&	18570&	18570&	18570&	270&&&5&	23590&	23590&	23590&865&	\\
		&6&	30900&	30900&	30900&	270&&&6&	23075&	23075&	23075&860&	\\
		&7&	22020&	22020&	22020&	210&&&7&	22375&	\bf 22740&\bf	22740&930&	\\
		&8&	18450&	18450&	18450&	210&&&8&	30000&	30000&	30000&955&	\\
		&9&	21450&	21450&	21450&	210&&&9&	24675&	24675&	24675&855&	\\
		&10&	14130&	14130&	14130&	210&&&10&	39985&	39985&	39985&975&	\\
		\hline
		\multirow{10}{*}{\STAB{\rotatebox[origin=c]{90}{\begin{minipage}{2cm}size: $3\times5$\newline $\sum \ub{d}$: 470 \end{minipage}}}}&1&	16410&	16410&	16410&410&&	\multirow{10}{*}{\STAB{\rotatebox[origin=c]{90}{\begin{minipage}{2cm}size: $10\times10$\newline $\sum \ub{d}$: 2180 \end{minipage}}}}&1&	36180&	36180&	36180&1760&
		\\
		&2&	14820&	14820&	14820&410&&	&2&	43260&	43260&	43260&1845&	\\
		&3&	20650&	20650&	20650&445&&	&3&	38915&	38915&	38915&1850&	\\
		&4&	12940&	12940&	12940&410&&	&4&	38845&	38905&	\bf 38915&1945&	\\
		&5&	16650&	16650&	16650&470&&	&5&	50150&	50150&	50150&1809&	\\
		&6&	16540&	16540&	16540&410&&	&6&	29885&	29885&	29885&1760&	\\
		&7&	10195&	10195&	10195&395&&	&7&	44100&	\bf 44145&	\bf 44145&1680&	\\
		&8&	\bf 13360&	12620&	\bf 13360&410&&	&8&	41950&	41950&	\bf 42420&1710&	\\
		&9&	11010&	11010&	11010&395&&	&9&	\bf 37465&	37180&	\bf 37465&1940& \\
		&10&	12915&	12915&	12915&445&&	&10&	48920&	48920&	48920&1810&	\\
		\hline
		\multirow{10}{*}{\STAB{\rotatebox[origin=c]{90}{\begin{minipage}{2cm}size: $4\times6$\newline $\sum \ub{d}$: 630 \end{minipage}}}}&1&	27125&	27125&	27125&630 &&	\multirow{10}{*}{\STAB{\rotatebox[origin=c]{90}{\begin{minipage}{2cm}size: $20\times20$\newline $\sum \ub{d}$: 3540 \end{minipage}}}}&1&	\phantom{0}9405&	\phantom{0}9405&	\bf\phantom{0}9425\footnotemark[1]&2840&4&	\\
		&2&	20635&	20635&	20635&615&&	&2&	\phantom{0}9015&	\bf\phantom{0}9140&	\phantom{0}9115\footnotemark[1]&2910&41	\\
		&3&	23615&	23615&	23615&585&&	&3&	\phantom{0}9335&	\phantom{0}9405&	\bf\phantom{0}9425\footnotemark[1]&2840&6	\\
		&4&	22375&	22375&	22375&645&&	&4&	\phantom{0}8930&	\bf\phantom{0}9130&	\bf\phantom{0}9130\footnotemark[1]&2720&151	\\
		&5&	19500&	19500&	19500&645&&	&5&	\phantom{0}9275&	\bf\phantom{0}9420&	\bf \phantom{0}9420\footnotemark[1]&2795&2	\\
		&6&	11380&	11380&	11380&535&&	&6&	10220&	\bf 10320&	\bf 10320\footnotemark[1]&2950&24	\\
		&7&	17245&	17245&	17245&640&&	&7&	\phantom{0}8685&	\phantom{0}8630&	\bf \phantom{0}8700\footnotemark[1]&3090&6	\\
		&8&	24180&	24180&	24180&660&&	&8&	\phantom{0}9260&	\phantom{0}9260&	\phantom{0}9260\footnotemark[1]&2760&15	\\
		&9&	24060&	24060&	24060&600&&	&9&	\phantom{0}9885&	\phantom{0}9885&	\phantom{0}9885\footnotemark[1]&2965&8	\\
		&10&	22825&	22825&	22825&640&&	&10&	\phantom{0}9225&	\phantom{0}9220&	\bf\phantom{0}9290\footnotemark[1]&2985&1	\\
		\hline
	\end{tabular}				
	\end{center}
	\footnotesize
	\footnotetext[1]{Possibly not optimal. The computation was terminated after $25$ minutes.}  
\end{sidewaystable}

\begin{figure}
	\centering
	\includegraphics[page=1]{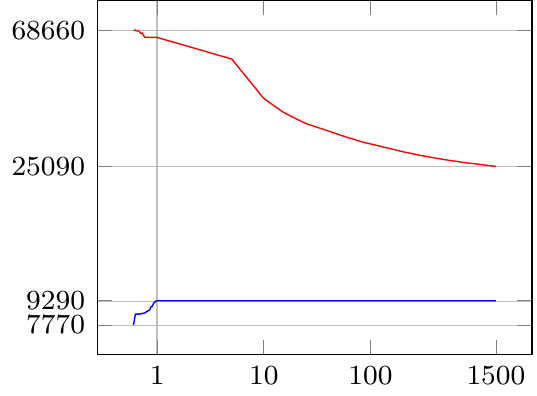}\hfill
	\includegraphics[page=2]{graf1.pdf}\vskip10pt
	\includegraphics[page=3]{graf1.pdf}\hfill
	\includegraphics[page=4]{graf1.pdf}
	\caption{Objective value (vertical axis) vs. time (horizontal axis) for $20\times 20$ instances with ids 10 (upper left), 1 (upper right), 8 (lower left) and 4 (lower right). Both axes are logarithmic. Red curve: upper bound (from Gurobi), blue curve: best solution found. Vertical leads denote the time of finding of the best solution found. Usually, the blue curves are flat as good feasible solutions are found quickly, the red curves decrease quite slowly. The hard part of the computations is to prove that a solution is optimal.}
	\label{fig:4:graphs}
\end{figure}

Figure \ref{fig:4:graphs} depicts the process of computation for 4 instances of size $20 \times 20$. It shows how the upper bound (which is computed by Gurobi) for the objective value approaches the best solution found during the computation. It can be seen that the upper bound follows almost the same curve for each of the instances. The figures indicate that a large portion of the computation time might be devoted to proving optimality of a solution that was found. It is a tempting question whether the space of subproblems can be significantly reduced to improve the tightness of upper bound. On the other hand, we can see that a good feasible solution is usually found quickly.

\section{Conclusion}\label{sec:concl}
Transportation problems belong to the most important models of operations research and play a crucial role in logistics and supply-chain management. While the traditional setting assumes that all data in the model are known exactly, this is not always the case in real-world situations, where the customer demand, supply quantities and transportation costs may change over time.
We considered the model of a transportation problem under interval uncertainty, in which the supply, demand and costs can be independently perturbed within some given lower and upper bounds. 

We studied the theoretical properties of interval transportation problems, including the problem of checking weak and strong feasibility or optimality of a given solution, i.e. deciding whether a given solution is feasible or optimal for some/all possible scenarios of the problem. We proved that weak optimality of a solution can be checked in polynomial time by solving a linear system for a suitable scenario. We also derived an analogous condition for checking strong optimality of a solution for problems with exact costs. Furthermore, we studied weak and strong feasibility and optimality of the interval transportation problem itself and surveyed the results on computing the optimal value range.

Knowing how the optimal values and optimal solutions change in the best or the worst case when the model is affected by uncertainty can aid in the decision-making process to modify the transportation plan or production quantities in order to reduce the total transportation costs. Here, we addressed the problem of computing the worst finite optimal value, which was recently proved to be NP-hard for equation-constrained interval transportation problems. We improved a complementarity-based method for computing the worst value exactly and included finding an initial feasible solution to speed up the computation. 

Although the method requires exponential time in the worst case, the conducted numerical experiments indicate that it can be competitive with the state-of-the-art heuristic algorithms for solving the problem on general instances of the interval transportation problem. Moreover, even if the instance size is too large so that the exact computation becomes intractable, the method can still be used with a limited time to provide a good approximation of the worst finite optimal value. This indicates that the method could also serve as a basis for a faster heuristic for approximating the worst optimal value in a~reasonable time, which would make it suitable for use in practical applications. 

\bmhead{Acknowledgments}
The authors wish to thank the authors of \cite{Xie:2017:ITP} and \cite{Cerulli:2017:BestWorstValues} for sharing the input data of the benchmark instances used in the computational experiments.

\section*{Declarations}

\paragraph{\rm\textbf{Funding} E. Garajová and M. Rada were supported by the Czech Science Foundation under Grant P403-20-17529S.}

\paragraph{\rm\textbf{Conflict of interest} The authors have no conflicts of interest to declare that are relevant to the content of this article.}

\paragraph{\rm\textbf{Ethical approval / Informed consent} This article does not contain any studies with human participants or animals performed by any of the authors.}

\paragraph{\rm\textbf{Authorship Contributions} All authors contributed equally to the preparation of the manuscript.}

\paragraph{\rm\textbf{Data Availability} The data that support the findings of this study are available from the corresponding author upon request.}

\bibliography{sor21}

\begin{thebibliography}{20}
\providecommand{\natexlab}[1]{#1}
\providecommand{\url}[1]{{#1}}
\providecommand{\urlprefix}{URL }
\providecommand{\doi}[1]{\url{https://doi.org/#1}}
\providecommand{\eprint}[2][]{\url{#2}}
 \bibcommenthead

\bibitem[{Carrabs et~al(2021)Carrabs, Cerulli, D’Ambrosio, {Della Croce}, and
  Gentili}]{Carrabs:2021:IITP}
Carrabs F, Cerulli R, D’Ambrosio C, et~al (2021) An improved heuristic
  approach for the interval immune transportation problem. Omega
  104:{102}{492}. \doi{10.1016/j.omega.2021.102492}

\bibitem[{Cerulli et~al(2017)Cerulli, D'Ambrosio, and
  Gentili}]{Cerulli:2017:BestWorstValues}
Cerulli R, D'Ambrosio C, Gentili M (2017) Best and {{Worst Values}} of the
  {{Optimal Cost}} of the {{Interval Transportation Problem}}. In: Optimization
  and Decision Science: Methodologies and Applications. {Springer, Cham},
  Springer PROMS, pp 367--374, \doi{10.1007/978-3-319-67308-0\_37}

\bibitem[{Chanas et~al(1993)Chanas, Delgado, Verdegay, and
  Vila}]{Chanas:1993:ITP}
Chanas S, Delgado M, Verdegay JL, et~al (1993) Interval and fuzzy extensions of
  classical transportation problems. Transportation Planning and Technology
  17(2):203--218. \doi{10.1080/03081069308717511}

\bibitem[{D'Ambrosio et~al(2020)D'Ambrosio, Gentili, and
  Cerulli}]{DAmbrosio:2020:optimalvaluerange}
D'Ambrosio C, Gentili M, Cerulli R (2020) The optimal value range problem for
  the {{Interval}} (immune) {{Transportation Problem}}. Omega 95:{102}{059}.
  \doi{10.1016/j.omega.2019.04.002}

\bibitem[{Garajov{\'a} and
  Hlad{\'i}k(2019)}]{Garajova:2019:optimalsolutionseta}
Garajov{\'a} E, Hlad{\'i}k M (2019) On the optimal solution set in interval
  linear programming. Comput Optim Appl 72(1):269--292.
  \doi{10.1007/s10589-018-0029-8}

\bibitem[{Garajov{\'a} and Rada(2021)}]{garajova:2021:ExactMethodWorst}
Garajov{\'a} E, Rada M (2021) Exact {{Method}} for the {{Worst Optimal Value}}
  of an {{Interval Transportation Problem}}. In: Proceedings of the 16th
  {{International Symposium}} on {{Operational Research}}, {{SOR}} 2021, pp
  545--550

\bibitem[{Garajov\'{a} et~al(2017)Garajov\'{a}, Hlad\'{\i}k, and
  Rada}]{Garajova:2017:OnTheProp}
Garajov\'{a} E, Hlad\'{\i}k M, Rada M (2017) On the properties of interval
  linear programs with a fixed coefficient matrix. In: Sforza A, Sterle C (eds)
  Optimization and Decision Science: {Methodologies} and Applications, Springer
  Proceedings in Mathematics \& Statistics, vol 217. Springer, Cham, p
  393--401, \doi{10.1007/978-3-319-67308-0_40}

\bibitem[{Garajov\'{a} et~al(2020)Garajov\'{a}, Hlad\'{\i}k, and
  Rada}]{Garajova:2020:BestWorstITP}
Garajov\'{a} E, Hlad\'{\i}k M, Rada M (2020) Interval transportation problem:
  {The} best and the worst (feasible) scenario. In: Kapounek S, Vr\'{a}nov\'{a}
  H (eds) 38th International Conference on Mathematical Methods in Economics
  2020 (MME 2020). Conference Proceedings. Mendel University in Brno, pp
  122--127

\bibitem[{Gerlach(1981)}]{Gerlach:1981:ZurLosunglinearer}
Gerlach W (1981) Zur {{L\"osung}} linearer {{Ungleichungssysteme}} bei
  {{St\"orung}} der rechten {{Seite}} und der {{Koeffizientenmatrix}}.
  Optimization 12:41--43. \doi{10.1080/02331938108842705}

\bibitem[{Hitchcock(1941)}]{Hitchcock:1941:TP}
Hitchcock FL (1941) The distribution of a product from several sources to
  numerous localities. Journal of Mathematics and Physics 20(1-4):224--230.
  \doi{10.1002/sapm1941201224}

\bibitem[{Hlad\'{\i}k(2012)}]{Hladik:2012a}
Hlad\'{\i}k M (2012) {Interval linear programming: A survey}. In: Mann ZA (ed)
  Linear Programming -- New Frontiers in Theory and Applications. Nova Science
  Publishers, New York, chap~2, p 85--120

\bibitem[{Hlad\'{\i}k(2017)}]{Hladik:2017:strongopt}
Hlad\'{\i}k M (2017) On strong optimality of interval linear programming. Optim
  Lett 11(7):1459--1468. \doi{10.1007/s11590-016-1088-3}

\bibitem[{Hlad\'{\i}k(2018)}]{Hladik:2018:FinWorstCase}
Hlad\'{\i}k M (2018) The worst case finite optimal value in interval linear
  programming. Croat Oper Res Rev 9(2):245--254. \doi{10.17535/crorr.2018.0019}

\bibitem[{Hoppmann-Baum(2022)}]{Hoppmann:2021:MinCostFlow}
Hoppmann-Baum K (2022) On the complexity of computing maximum and minimum
  min-cost-flows. Networks 79(2):236--248. \doi{10.1002/net.22060}

\bibitem[{Juman and Hoque(2014)}]{Juman:2014:ITP}
Juman Z, Hoque M (2014) A heuristic solution technique to attain the minimal
  total cost bounds of transporting a homogeneous product with varying demands
  and supplies. European Journal of Operational Research 239(1):146--156.
  \doi{10.1016/j.ejor.2014.05.004}

\bibitem[{Liu(2003)}]{Liu:2003:ITP}
Liu ST (2003) The total cost bounds of the transportation problem with varying
  demand and supply. Omega 31(4):247--251. \doi{10.1016/S0305-0483(03)00054-9}

\bibitem[{Oettli and
  Prager(1964)}]{Oettli:1964:Compatibilityapproximatesolution}
Oettli W, Prager W (1964) Compatibility of approximate solution of linear
  equations with given error bounds for coefficients and right-hand sides.
  Numer Math 6(1):405--409. \doi{10.1007/BF01386090}

\bibitem[{Rohn(2006)}]{Rohn:2006:Intervallinearprogramminga}
Rohn J (2006) Interval linear programming. In: Fiedler M, Nedoma J, Ram{\'i}k
  J, et~al (eds) Linear {{Optimization Problems}} with {{Inexact Data}}.
  {Springer US}, {Boston, MA}, p 79--100, \doi{10.1007/0-387-32698-7\_3}

\bibitem[{Szwarc(1971)}]{Szwarc:1971:Paradox}
Szwarc W (1971) The transportation paradox. Naval Research Logistics Quarterly
  18(2):185--202. \doi{10.1002/nav.3800180206}

\bibitem[{Xie et~al(2017)Xie, Butt, Li, and Zhu}]{Xie:2017:ITP}
Xie F, Butt MM, Li Z, et~al (2017) An upper bound on the minimal total cost of
  the transportation problem with varying demands and supplies. Omega
  68:105--118. \doi{10.1016/j.omega.2016.06.007}

\end{thebibliography}


\end{document}